\documentclass[10pt,twoside, a4paper, english, reqno]{amsart}
\usepackage{graphicx, amsmath, varioref, amscd, amssymb,color, bm, stmaryrd, amsthm}
\usepackage{fancybox}
\usepackage[pdftex, colorlinks=true]{hyperref}

\usepackage{pdfsync}

\numberwithin{equation}{section}
\allowdisplaybreaks

\newtheorem{theorem}{Theorem}[section]
\newtheorem{lemma}[theorem]{Lemma}
\newtheorem{corollary}[theorem]{Corollary}

\newtheorem{proposition}[theorem]{Proposition}
\theoremstyle{definition}

\theoremstyle{remark}

\newcommand{\Div}{\operatorname{div}}

\newcommand{\Grad}{\nabla}
\newcommand{\vr}{\varrho}

\def\vphi{\varphi}

\newcommand{\weak}{\rightharpoonup}

\newcommand{\weakstar}{\overset{\star}\rightharpoonup}

\newcommand{\Set}[1]{\left\{#1\right\}}

\newcommand{\R}{\mathbb{R}}

\begin{document}

\title[Strong local alignment ]
	  {On strong local alignment \\ in the kinetic Cucker-Smale model} 

	\author[Karper]{Trygve K. Karper}\thanks{The work of T.K was supported by the Research Council of Norway through the project 205738}

	\address[Karper]{\newline
	Center for Scientific Computation and Mathematical Modeling, University of Maryland, College Park, MD 20742}
	\email[]{\href{karper@gmail.com}{karper@gmail.com}}
	\urladdr{\href{http://folk.uio.no/~trygvekk}{folk.uio.no/\~{}trygvekk}}

	\author[Mellet]{Antoine Mellet}\thanks{The work of A.M was supported by the National Science Foundation  under the Grant  DMS-0901340}

	\address[Mellet]{\newline
	Department of Mathematics, University of Maryland, College Park, MD 20742}
	\email[]{\href{mellet@math.umd.edu}{mellet@math.umd.edu}}
	\urladdr{\href{http://www.math.umd.edu/~mellet}{math.umd.edu/~{}mellet}}

	\author[Trivisa]{Konstantina Trivisa}\thanks{The work of K.T. was supported by the National Science Foundation   under the Grant  DMS-1109397}
	\address[Trivisa]{\newline
	Department of Mathematics, University of Maryland, College Park, MD 20742}
	\email[]{\href{trivisa@math.umd.edu}{trivisa@math.umd.edu}}
	\urladdr{\href{http://www.math.umd.edu/~trivisa}{math.umd.edu/~{}trivisa}}

\date{\today}

\subjclass[2010]{Primary:35Q84; Secondary:35D30}

\keywords{flocking, kinetic equations, existence, velocity averaging, Cucker-Smale, self-organized dynamics}

\maketitle

\begin{abstract}
In the recent papers \cite{KMT-2012-1, KMT-2012-2} the authors study the existence 
of weak solutions  and the hydrodynamic limit of  
kinetic flocking equations with 
strong local alignment. 
The introduction of a strong local alignment term to model flocking behavior
was  formally
motivated in these papers as a limiting case of an alignment term proposed by Motsch and Tadmor \cite{MT-2011}.
In this paper, we rigorously justify this limit, and show that the equation considered in \cite{KMT-2012-1, KMT-2012-2} is indeed a limit of the Motsch-Tadmor model when the radius of interaction goes to zero. 
The analysis involves velocity 
averaging lemmas and several $L^p$ estimates.
\end{abstract}

\section{Introduction}
In  \cite{MT-2011} Motsch and Tadmor identify an 
undesirable feature of the widely studied Cucker-Smale flocking model (cf.~\cite{CS-2007A, CS-2007B, HT-2008}):
In the Cucker-Smale model, the alignment 
of each individual is scaled with the total mass such that
the effect of alignment is almost negligible in sparsely populated
regions. 
To avoid this, they propose a new model in which
the alignment term is normalized with a local average density instead of 
the total mass.
Motivated by this work, the authors of the present paper proposed in \cite{KMT-2012-1} to combine 
the Cucker-Smale and Motsch-Tadmor models, letting the usual Cucker-Smale alignment term dominate the 
large scale dynamics and the Motsch-Tadmor term
the small scale dynamics. This remedies the aforementioned deficiency 
while maintaining the large scale dynamics of the Cucker-Smale model.
At the mesoscopic level, the proposed model takes the following form
\begin{equation}\label{eq:1}
	\begin{split}
		f_t + \Div_x(vf) - \Div_v(f\Grad_x \Psi) + \Div_v (f F[f]) + \Div_v(f L^r[f]) = 0.
	\end{split}
\end{equation}
Here, the unknown is the distribution function $f:= f(t,x,v)$.
The first alignment term $F[\cdot]$ is the standard Cucker-Smale alignment term given by
\begin{equation}\label{eq:LCS}
	F[f(x,v)] = \int_{\R^{2d}} \Phi(x-y)f(y,w)(w-v)~dwdy,
\end{equation}
where $\Phi(x)$ is the influence function (e.g $\Phi(x) = 1/(1+|x|^2)$).
The second alignment term $L^r[\cdot]$ is the Motsch-Tadmor alignment term  given by (see \cite{MT-2011}):
\begin{equation}\label{eq:L}
	L^r[f(x,v)] = \frac{\int_{\R^{2d}} K^r(x-y)f(y,w)(w-v)~dwdy}{\int_{\R^{2d}} K^r(x-y)f(y,w)~dwdy}.
\end{equation}
where the index $r$ denotes the radius of influence of $K^r$ (see \eqref{eq:Rr} below for the definition of $K^r$). 
Finally, the function $\Psi(x)$ 
is a given confinement potential introduced to avoid mass vanishing to infinity (it satisfies $\lim_{|x|\to\infty} \Psi(x)=\infty$). 
Note that this term is not necessary if we assume, for instance, that the initial distribution $f(0,x,v)$ has compact support in $x$ and $v$ (since \eqref{eq:1} will propagate this property).

The only difference between \eqref{eq:LCS} and \eqref{eq:L} is the renormalization by the local average density $\int_{\R^{2d}} K^r(x-y)f(y,w)~dwdy$. 
We can also write $L^r$ as follows: 
$$ L^r(f)=\tilde u^r- v$$
where 
$$ \tilde u^r(x) = \frac{\int_{\R^{2d}} K^r(x-y) w f(y,w)~dwdy}{\int_{\R^{2d}} K^r(x-y)f(y,w)~dwdy}.$$
In this form, it is obvious that the strength of the alignment force is now independent of the total mass, which was the original intend of \cite{MT-2011}. 
Another effect of this renormalization is to break the symmetry of the alignment. As a consequence, \eqref{eq:1} does not conserve momentum nor energy, and the derivation of an energy bound will be one of the main difficulty in the analysis of \eqref{eq:1}.

The purpose of this paper is to study the limit $r \rightarrow 0 $ in Equation \eqref{eq:1} 
when the function $K^r$ converges to the Dirac distribution $\delta_0$.
In other words, we study the limit of  \eqref{eq:1} when the Motsch-Tadmor term $\Div_v (fL^r[f])$ 
becomes a local (in space) alignment term.
For the sake of simplicity, we assume that $K^r$ has the form
\begin{equation}\label{eq:Rr}
	K^r(x) = r^{-d}K\left(\frac{x}{r}\right), 
	\end{equation}
where $K$ is a given function satisfying
\begin{equation}\label{eq:R} 
K\in C_c(\R^d), \quad K(0)>0,\quad \int_{\R^d}K^r(x)~dx = 1.
\end{equation}
When $r\to0$, we then formally expect to have
$$ \tilde u^r (x)\overset{r \rightarrow 0}{\longrightarrow} u(x)= \frac{\int_{\R^{d}}  w f(x,w)~dw}{\int_{\R^{d}} f(x,w)~dw}
$$
and so
\begin{equation}\label{eq:Mconv}
	\begin{split}
			L^r[f(x,v)] 
			& \overset{r \rightarrow 0}{\longrightarrow} \frac{\int_{\R^d}f(x,w)(w-v)~dw}{\int_{\R^d}f(x,w)~dw} := u-v.
	\end{split}
\end{equation}
Passing to the limit in \eqref{eq:1}, we thus obtain the equation
\begin{equation}\label{eq:2}
	f_t + \Div_x(vf) - \Div_v(f\Grad_x \Psi) + \Div_v (f F[f]) + \Div_v(f(u-v)) = 0,
\end{equation}
which is studied in \cite{KMT-2012-1, KMT-2012-2}.
The new local alignment term can also be seen as a local friction term centered at $u$. 

The purpose of this paper is to rigorously justify this limit $r\to 0$.
More precisely, we will prove  the following theorem:
\begin{theorem}\label{thm:main}
Let $0 \leq f_0 \in L^1(\R^{2d})\cap L^\infty(\R^{2d})$ be given and 
$T$ be a finite final time. For each $r>0$, let $f^r(t,x,v)$ be a weak solution
of \eqref{eq:1} in the sense that
\begin{align}\label{eq:weak}
			&\int_{\R^{2d+1}} - f^r \phi_t - vf^r\Grad_x \phi +f^r\Grad_x \Psi \Grad_x \phi - f^rF[f^r]\Grad_v \phi ~dvdxdt\\
			&- \int_{\R^{2d+1}}f^rL^r[f^r]\Grad_v \phi~dvdxdt
			= \int_{\R^{2d}} f_0 \phi(0,\cdot)~dvdx, \quad \forall \phi \in C_c^\infty([0,T)\times \R^{2d}), \nonumber
\end{align}
where $L^r$ is given by \eqref{eq:L} and $K^r$ is given by \eqref{eq:Rr}.
Then, as $r\rightarrow 0$,
\begin{equation*}
	\begin{split}
			f^r &\weakstar f \quad \text{in $L^\infty(0,T;L^\infty(\R^{2d})\cap L^1(\R^{2d}))$},\\ 
			f^rL^r[f^r] &\weak f(u-v)\quad \text{in $L^q((0,T)\times \R^{2d})$}, ~ q< \frac{d+2}{d+1},
	\end{split}
\end{equation*}
with $u(t,x)$ such that $j=\rho u$ (see \eqref{eq:uu1} for a precise definition). 
Furthermore, the limit $f(t,x,v)$ is a weak solution of \eqref{eq:2} in the sense that
\begin{align}\label{eq:weak2}
			&\int_{\R^{2d+1}} - f \phi_t - vf\Grad_x \phi +f\Grad_x \Psi \Grad_x \phi - fF[f]\Grad_v \phi ~dvdxdt\\
			&- \int_{\R^{2d+1}}f(u-v)\Grad_v \phi~dvdxdt 
			= \int_{\R^{2d}} f^0 \phi(0,\cdot)~dvdx, \quad \forall \phi \in C_c^\infty([0,T)\times \R^{2d}). \nonumber
\end{align}

\end{theorem}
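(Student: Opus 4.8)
The plan is to derive a priori bounds on $f^r$ that are uniform in $r$, extract weakly convergent subsequences, and then pass to the limit $r\to0$ in every term of the weak formulation \eqref{eq:weak}, treating the nonlinear alignment terms by velocity averaging. The first, easy estimates are mass conservation: integrating \eqref{eq:1} in $(x,v)$ gives $\|f^r(t)\|_{L^1}=\|f_0\|_{L^1}$; and an $L^\infty$ bound: since \eqref{eq:1} is a transport equation in $(x,v)$ whose drift field has $v$-divergence bounded below (the Cucker--Smale term contributes a nonpositive term and $\Div_v L^r[f^r]=-d$), the maximum principle along characteristics yields $\|f^r(t)\|_{L^\infty}\le e^{Ct}\|f_0\|_{L^\infty}$ with $C$ independent of $r$. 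Together these give the uniform $L^\infty(0,T;L^1\cap L^\infty)$ bound and hence the weak-$\star$ limit $f$.

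The crux is a uniform energy bound for $E^r(t)=\int(\tfrac{|v|^2}{2}+\Psi)f^r\,dv\,dx$. Testing \eqref{eq:1} against $\tfrac{|v|^2}{2}+\Psi$, the free transport and confinement contributions cancel, the Cucker--Smale term produces the nonnegative dissipation $\tfrac12\iint\Phi(x-y)|v-w|^2 f^rf^r$, and the Motsch--Tadmor term contributes $\int v\cdot(\tilde u^r-v)f^r\,dv\,dx$. Completing the square rewrites the latter as $-\iint|v-\tilde u^r|^2 f^r\,dv\,dx+\int(\rho^r|\tilde u^r|^2-\tilde u^r\cdot j^r)\,dx$, where $\rho^r=\int f^r\,dv$ and $j^r=\int vf^r\,dv$. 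The first piece is dissipative; the error term, which is exactly where the broken symmetry enters, is controlled using Jensen's inequality $\rho^r_K|\tilde u^r|^2\le K^r*\big(\int|w|^2 f^r\,dw\big)$ for the smoothed density $\rho^r_K:=K^r*\rho^r$, so that $\int\rho^r_K|\tilde u^r|^2\,dx$ is bounded by the kinetic energy; combining this with Cauchy--Schwarz and the $L^\infty$ bound and applying Gronwall closes the estimate and gives $\sup_{[0,T]}E^r(t)\le C$ uniformly in $r$. Interpolating the energy bound against the $L^\infty$ bound then yields $f^r v,\ f^rL^r[f^r]\in L^q$ for $q<\tfrac{d+2}{d+1}$, which is the origin of the exponent in the statement.

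Next I would extract compactness of the macroscopic quantities by velocity averaging. Writing \eqref{eq:1} as $\Pt f^r+v\cdot\Grad_x f^r=\Div_v G^r$ with $G^r=f^r(\Grad_x\Psi-F[f^r]-L^r[f^r])$ bounded in $L^q_{\mathrm{loc}}$, the $L^p$ velocity averaging lemma gives that the velocity averages $\int f^r\psi(v)\,dv$ are relatively compact in $L^p_{\mathrm{loc}}((0,T)\times\R^d)$ for every $\psi\in C_c^\infty(\R^d)$. Using the tightness in $v$ furnished by the energy bound, this upgrades to strong convergence $\rho^r\to\rho$ and $j^r\to j$ in $L^p_{\mathrm{loc}}$. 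Convolving with $K^r$ and using $K^r\to\delta_0$ then gives $K^r*\rho^r\to\rho$ and $K^r*j^r\to j$ strongly, whence $\tilde u^r=(K^r*j^r)/(K^r*\rho^r)\to j/\rho=:u$ a.e.\ on $\{\rho>0\}$, with $u$ defined through $j=\rho u$ as in \eqref{eq:uu1}.

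Finally I would pass to the limit in \eqref{eq:weak}. The linear terms converge by the weak-$\star$ convergence of $f^r$. For the Cucker--Smale term I write $F[f^r]=\Phi*j^r-(\Phi*\rho^r)v$ and note that $\Phi*\rho^r\to\Phi*\rho$, $\Phi*j^r\to\Phi*j$ while the velocity averages $\int f^r\Grad_v\phi\,dv$ and $\int vf^r\cdot\Grad_v\phi\,dv$ converge strongly, so the product passes to the limit. For the Motsch--Tadmor term, since $\tilde u^r$ is independent of $v$ I split
\[
\int f^rL^r[f^r]\Grad_v\phi\,dv\,dx\,dt=\int \tilde u^r\Big(\int f^r\Grad_v\phi\,dv\Big)\,dx\,dt-\int\Big(\int vf^r\cdot\Grad_v\phi\,dv\Big)\,dx\,dt,
\]
and again the inner integrals are velocity averages converging strongly, which combined with $\tilde u^r\to u$ yields $f^rL^r[f^r]\weak f(u-v)$ in $L^q$ and hence \eqref{eq:weak2}. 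The two points demanding the most care are the uniform energy estimate, because the Motsch--Tadmor term neither conserves energy nor has an obvious sign, and the passage to the limit in $\tilde u^r\big(\int f^r\Grad_v\phi\,dv\big)$ on the vacuum set $\{\rho=0\}$ where $u$ is undefined; there one exploits that $\int f^r\tilde u^r\,dv=\rho^r\,(K^r*j^r)/(K^r*\rho^r)\to\rho u=j$ vanishes together with $\rho^r$, so the limiting product is the correct one.
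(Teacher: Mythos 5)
Your overall architecture (uniform $L^1\cap L^\infty$ bounds, uniform energy estimate, velocity averaging for $\rho^r,j^r$, mollifier argument for $\widetilde\rho^r,\widetilde j^r$, identification of the limit off and on the vacuum set) mirrors the paper's proof, but there is a genuine gap at exactly the point the paper identifies as the main difficulty: the uniform-in-$r$ energy estimate. After completing the square, the error term produced by the Motsch--Tadmor term is $\int_{\R^d}\rho^r|\widetilde u^r|^2\,dx$ (equivalently your $\int \rho^r|\widetilde u^r|^2-\widetilde u^r\cdot j^r\,dx$; the cross term reduces to the same quantity by Cauchy--Schwarz), i.e.\ it pairs $|\widetilde u^r|^2$ with the \emph{raw} density $\rho^r$. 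Jensen's inequality, as you invoke it, only gives $\widetilde\rho^{\,r}|\widetilde u^r|^2\le K^r\star\bigl(\int|w|^2f^r\,dw\bigr)$, i.e.\ it controls $\int\widetilde\rho^{\,r}|\widetilde u^r|^2\,dx$ with the \emph{smoothed} density $\widetilde\rho^{\,r}=K^r\star\rho^r$. No application of Cauchy--Schwarz and the $L^\infty$ bound converts one into the other: the ratio $\rho^r/\widetilde\rho^{\,r}$ is pointwise unbounded, uniformly badly so as $r\to0$ (a density spike of width $\delta\ll r$ gives $\rho/\widetilde\rho\sim (r/\delta)^d$), so the step ``combining this with Cauchy--Schwarz and the $L^\infty$ bound \dots closes the estimate'' does not close anything.

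This is precisely why the paper needs the technical Lemma \ref{lem:MT} (and Corollary \ref{cor:Rr}): the covering argument shows $\sup_{y}\int_{\R^d}K^r(x-y)\,\frac{\rho(x)}{\widetilde\rho(x)}\,dx\le C$ with a constant \eqref{eq:CC} that is invariant under the rescaling $K\mapsto r^{-d}K(\cdot/r)$, and then Fubini converts the Jensen bound into $\int\rho^r|\widetilde u^r|^2\,dx\le C\mathcal{E}(t)$ (this is \eqref{eq:product} inside Lemma \ref{lem:Lbound}). Note also that the gap propagates through the rest of your argument: the $L^q$ bound on the source $G^r\supset f^r\widetilde u^r$ that you feed into the averaging lemma, the bound \eqref{eq:mphi} identifying the weak limit of $f^n\widetilde u^n$, and your vacuum-set argument (where you need $\rho^r|\widetilde u^r|\le (\rho^r)^{1/2}\bigl(\rho^r|\widetilde u^r|^2\bigr)^{1/2}\to0$ on $\{\rho=0\}$) all rest on the same missing estimate $\|(\rho^r)^{1/2}\widetilde u^r\|_{L^2}\le C$ uniformly in $r$. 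So the proposal is not a complete proof: it omits the one lemma that makes the uniformity in $r$ true, and the elementary substitutes offered in its place fail.
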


\section{Preliminary material}
In this section we have gathered some results 
that will be needed to prove Theorem \ref{thm:main}.
We begin by introducing some convenient notations. 
We denote the first and second moments of $f$,
and their $K^r$ weighted counterparts, as follows:
\begin{equation*}
	\begin{split}
		\vr(t,x) &= \int_{\R^{2d}}f(y,w)~dwdy, \quad \qquad \widetilde \vr^r(t,x) = \int_{\R^{2d}}K^r(x-y)f(y,w)~dwdy, \\
		j(t,x) &= \int_{\R^{2d}}f(y,w)w~dydw, \quad \quad \phantom{~} \widetilde j^r(t,x) = \int_{\R^{2d}}K^r(x-y)f(y,w)w~dwdy.
	\end{split}
\end{equation*}
We also define the corresponding velocities
\begin{equation*}
	u(t,x) = \frac{j(t,x)}{\vr(t,x)}, \qquad \widetilde u^r(t,x) = \frac{\widetilde j^r(t,x)}{\widetilde \vr^r(t,x)}.
\end{equation*}
Note that the definition of $u$ (and $\widetilde u$) is ambiguous if $\rho$ (resp. $\widetilde \vr$) vanishes. We thus define $u$ pointwise by
\begin{equation}\label{eq:uu1} 
u(x,t)=\left\{ \begin{array}{ll}\displaystyle \frac{j(x,t)}{\rho(x,t)} & \mbox{ if }  \rho(x,t) \neq 0 \\[8pt] 0 &  \mbox{ if }  \rho(x,t) = 0 \end{array}\right. .
\end{equation}
Since we have
$$j \leq  \left(\int |v|^2 f(x,v,t)\,  dv\right)^{1/2} \rho^{1/2},$$
the bound on the energy of $f$ will imply that $j=0$ whenever $\rho=0$ and so (\ref{eq:uu1}) implies in particular $j=\rho u$.

With the above notation, we have $L^r[f] = \widetilde u^r - v$, 
and \eqref{eq:1} can be written as
\begin{equation}\label{eq:22}
	\begin{split}
		f_t + \Div_x(fv) - \Div_v(f \Grad_x \Psi) + \Div_v(fF[f]) + \Div_v(f(\widetilde u^r - v)) = 0.
	\end{split}
\end{equation}

The following proposition states that \eqref{eq:22} is well-posed in the sense 
of weak solutions (see \cite{KMT-2012-1} for the proof).
\begin{proposition}\label{pro:}
Assume that $0 \leq f_0 \in [L^\infty\cap L^1](\R^{2d})$ and $T < +\infty$ are given. 
Then, for any $r> 0$, \eqref{eq:22} admits a weak solution $0 \leq f \in C(0,T;L^1(\R^{2d}))$.
Moreover,  $f$ satisfies
\begin{equation}\label{eq:fLp}
	\|f\|_{L^\infty(0,T;L^p(\R^{2d}))} \leq e^{\frac{CT}{p'}}\|f_0\|_{L^p(\R^{2d})},
\end{equation}
\begin{equation}	 \label{eq:fEr}
 \mathcal{E}(t):= \int_{\R^{2d}} \left(|v|^2 +\Psi(x)\right) f(t,x,v) \, dv\, dx \leq Ce^{CT}\mathcal{E}(0),
\end{equation}
where the constant $C$ might depend on $r$.
\end{proposition}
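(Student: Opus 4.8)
The plan is to build $f$ as the limit of a linearized iteration scheme and to read \eqref{eq:fLp}--\eqref{eq:fEr} off a priori estimates that are stable along the scheme. Since the coefficient $\widetilde u^r=\widetilde j^r/\widetilde\vr^r$ is a genuinely nonlinear, nonlocal functional of $f$ that is singular where $\widetilde\vr^r$ vanishes, I would first freeze and regularize: given an iterate $f^n\ge 0$ with mass $\|f^n\|_{L^1}=\|f_0\|_{L^1}$ and the expected bounds, I mollify $\widetilde u^{r,n}$ (e.g.\ replace $\widetilde\vr^{r,n}$ by $\widetilde\vr^{r,n}+\delta$) so that the drift
\[ b^n=\big(v,\ F[f^n]+\widetilde u^{r,n}-v-\Grad_x\Psi\big) \]
is Lipschitz, and solve the linear transport equation \eqref{eq:22} with this drift and datum $f_0$ for $f^{n+1}$ by the method of characteristics. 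Positivity is automatic, and mass is conserved because \eqref{eq:22} is in divergence form. The whole difficulty is then shifted onto uniform estimates plus compactness, which I record next.

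\textbf{$L^p$ bounds.} In advective form the zeroth-order coefficient is exactly the drift divergence $\Div_{x,v}b=\Div_v F[f]-d=-d(\Phi*\vr+1)$. Testing with $p\,f^{p-1}$ and integrating by parts gives
\[ \frac{d}{dt}\int_{\R^{2d}} f^p\,dv\,dx=d(p-1)\int_{\R^{2d}}(\Phi*\vr+1)f^p\,dv\,dx\le C_1(p-1)\int_{\R^{2d}}f^p\,dv\,dx, \]
with $C_1=d(\|\Phi\|_{L^\infty}\|f_0\|_{L^1}+1)$, using $\Phi\in L^\infty$ and conservation of mass. Gronwall together with $(p-1)/p=1/p'$ yields \eqref{eq:fLp}; note this constant is independent of $r$, which will matter for the main theorem.

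\textbf{Energy bound (the crux).} Testing \eqref{eq:22} with $\abs{v}^2+\Psi(x)$ and integrating by parts, the two potential terms combine into a single cross term and one obtains
\[ \frac{d}{dt}\mathcal{E}=-\int_{\R^{2d}}\Grad_x\Psi\cdot v\,f\,dv\,dx+2I-2\int_{\R^{2d}}\abs{v}^2 f\,dv\,dx+2\int_{\R^d} j\cdot\widetilde u^r\,dx, \]
where $I=\int_{\R^{2d}} v\cdot fF[f]\,dv\,dx$. Symmetry of $\Phi$ gives the dissipative sign $I\le 0$, and $-2\int\abs{v}^2f$ is dissipative as well; the cross term is handled by Young's inequality and a growth assumption on $\Psi$ (say $\abs{\Grad_x\Psi}^2\le C(1+\Psi)$). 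The hard, genuinely new term is the non-symmetric Motsch--Tadmor contribution $2\int j\cdot\widetilde u^r$, for which there is no free cancellation precisely because \eqref{eq:22} conserves neither momentum nor energy. The clean estimate I would use comes from $\abs{j}\le(\int\abs{v}^2 f\,dv)^{1/2}\vr^{1/2}$ and Cauchy--Schwarz against the probability kernel $K^r(x-\cdot)$: with $e(x)=\int_{\R^d}\abs{v}^2 f\,dv$ one gets
\[ \abs{\widetilde u^r}^2\,\widetilde\vr^r\le K^r*e,\qquad\text{hence}\qquad \int_{\R^d}\abs{\widetilde u^r}^2\,\widetilde\vr^r\,dx\le\int_{\R^d} e\,dx\le\mathcal{E}. \]
After $2\int j\cdot\widetilde u^r\le\int\abs{v}^2 f+\int\abs{\widetilde u^r}^2\vr$, the remaining obstacle is to trade the weight $\widetilde\vr^r=K^r*\vr$ for $\vr$; this is exactly where the $r$-dependence of the constant enters, through the lower bound $\widetilde\vr^r(x)\ge c\,r^{-d}\int_{\abs{x-y}\le\beta r}\vr(y)\,dy$ furnished by $K(0)>0$ in \eqref{eq:R}, combined with the $r$-independent $L^p$ control of $\vr$. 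Inserting the resulting bound $\int\abs{\widetilde u^r}^2\vr\le C(r)\mathcal{E}$ and applying Gronwall produces \eqref{eq:fEr}.

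\textbf{Passage to the limit.} With these estimates uniform in $n$ (and $\delta$) — the energy bound closes as a coupled Gronwall inequality across iteration levels — I would pass to the limit. The $L^\infty(0,T;L^1\cap L^\infty)$ bounds give weak-$\ast$ compactness of the $f^n$, while the regularity of the moments $\vr^n,j^n$ supplied by velocity averaging lemmas provides the strong compactness needed to identify the limits of $F[f^n]$ and of the quotient $\widetilde u^{r,n}=\widetilde j^{r,n}/\widetilde\vr^{r,n}$; this last identification is the only delicate step in the limit, again because of the possible vanishing of $\widetilde\vr^r$. The limit is a weak solution of \eqref{eq:22}, and the renormalization property of the transport equation upgrades it to $f\in C(0,T;L^1(\R^{2d}))$. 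I expect the energy estimate, i.e.\ extracting the $r$-dependent bound on the non-symmetric term $2\int j\cdot\widetilde u^r$, to be the main obstacle throughout.
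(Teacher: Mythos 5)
Your a priori estimates follow the paper's blueprint quite closely: the $L^p$ bound via testing with $pf^{p-1}$ and the identities $\Div_v F[f]=-d\,\Phi*\vr$, $\Div_v L^r[f]=-d$ is exactly the proof of Lemma~\ref{lem:Lp}, and your energy computation reduces matters, just as the paper's \eqref{eq:hvafaen} and Lemma~\ref{lem:Lbound} do, to the single problematic bound $\int_{\R^d}\vr\,|\widetilde u^r|^2\,dx\le C\,\mathcal{E}(t)$. (The existence/construction part cannot be compared: the paper does not prove it, but quotes it from \cite{KMT-2012-1}; your iteration sketch is plausible but is not what carries the content here.) Two remarks on the energy step: you test with the unbalanced weight $|v|^2+\Psi$, which leaves the cross term $-\int\Grad_x\Psi\cdot v f$ and forces you to add a growth hypothesis $|\Grad_x\Psi|^2\le C(1+\Psi)$ that the paper never assumes; testing with $|v|^2/2+\Psi$ as in \eqref{eq:hvafaen} makes the potential terms cancel identically, and \eqref{eq:fEr} follows since the two energies are comparable.

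The genuine gap is at the step you yourself call the crux. To trade the weight $\widetilde\vr^r=K^r*\vr$ for $\vr$ you invoke the pointwise lower bound $\widetilde\vr^r(x)\ge c\,r^{-d}\int_{|x-y|\le\beta r}\vr\,dy$ ``combined with the $r$-independent $L^p$ control of $\vr$.'' Neither ingredient can close the estimate. After writing $|\widetilde u^r|^2\widetilde\vr^r\le K^r*e$ and using Fubini, what must be bounded is $\sup_y\int_{\R^d} K^r(x-y)\,\vr(x)/\widetilde\vr^r(x)\,dx$; this quantity is invariant under $\vr\mapsto\lambda\vr$, so no $L^p$ information on $\vr$ can enter the bound --- it must hold uniformly over all nonnegative $\vr\in L^1$. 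Moreover the pointwise lower bound only controls $\widetilde\vr^r$ through mass lying in the ball $B_{\beta r}(x)$ where $K\ge c>0$, and since $K$ is continuous with compact support this ball is necessarily strictly smaller than the support $B_{R_2 r}$ of $K^r(\cdot-y)$: a peak of $\vr$ of height $H$ and width $\epsilon\ll r$ makes $\vr(x)/\widetilde\vr^r(x)\sim(r/\epsilon)^d$ pointwise, so no pointwise argument survives. What saves the estimate is an \emph{integrated} covering argument --- the paper's key technical Lemma~\ref{lem:MT} and Corollary~\ref{cor:Rr}: cover $B_{R_2r}(y)$ by $N\sim(R_2/R_1)^d$ balls $B_i$ of radius $R_1r/2$, use $\widetilde\vr^r\ge c r^{-d}\int_{B_i}\vr$ for $x\in B_i$ to get $\int_{B_i}\vr/\widetilde\vr^r\,dx\le c^{-1}r^d$, and sum; the $r^d$ cancels against $\|K^r\|_\infty=r^{-d}\|K\|_\infty$. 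Your sketch is missing exactly this idea, which is the only nontrivial ingredient of the proposition beyond standard machinery. Note also that this lemma yields a constant \emph{independent} of $r$, contrary to your expectation that the energy bound is ``exactly where the $r$-dependence enters''; that uniformity is harmless for the present proposition (where $C=C(r)$ is allowed) but is precisely the point of Proposition~\ref{pro:energy} and is indispensable for the paper's main theorem.
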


To conclude this section, we recall the following classical lemma, which will be used to derive
$L^p$ integrability of $\vr$ and $j$ (see \cite{KMT-2012-1} for the proof):
\begin{lemma}\label{lem:jn}
Assume that $f$ satisfies
$$ \|f \|_{L^{\infty}([0,T]\times \R^{2d})} \leq M, \quad \mbox{ and } \quad \int_{\R^{2d}} |v|^2 f \, dv dx \leq M .$$
Then there exists a constant $C=C(M) $ such that
\begin{equation}\label{eq:jnest}
\begin{array}{l}
\|\rho\|_{L^\infty(0,T;L^{p}(\R^d))} \leq C ,\quad\mbox{ for every $p\in[1,\frac{d+2}{d}),$}\\ [5pt]
\| j\|_{L^\infty(0,T;L^{p}(\R^d))} \leq C,\quad\mbox{ for every $p\in[1,\frac{d+2}{d+1}),$}
\end{array}
\end{equation}
where $\rho=\int f\, dv$ and $j=\int vf\,dv $.
\end{lemma}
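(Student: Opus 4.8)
The plan is to prove the moment estimates \eqref{eq:jnest} by interpolating the pointwise velocity decay of $f$ against the $L^\infty$ bound, a standard technique in kinetic theory. First I would fix $t$ and split the velocity integral defining $\rho(x)=\int_{\R^d} f\,dv$ at a threshold $|v|=R$ to be optimized. On the region $\{|v|\le R\}$ I bound $f$ by $M$ and integrate to get a contribution of order $M R^d$. On the region $\{|v|>R\}$ I use the energy density: since $|v|^2 f \le |v|^2 f$ trivially, I write $f = (|v|^2 f)/|v|^2 \le R^{-2}|v|^2 f$ there, so the tail contributes at most $R^{-2}\int |v|^2 f\,dv$. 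Thus pointwise
\begin{equation*}
\rho(x) \le C\left( M R^d + R^{-2}\,e(x)\right), \qquad e(x):=\int_{\R^d}|v|^2 f(x,v)\,dv.
\end{equation*}

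Next I would optimize in $R$. Minimizing $M R^d + R^{-2} e(x)$ over $R>0$ gives the choice $R \sim (e(x)/M)^{1/(d+2)}$, which yields the pointwise interpolation bound
\begin{equation*}
\rho(x) \le C\, M^{\frac{2}{d+2}}\, e(x)^{\frac{d}{d+2}}.
\end{equation*}
Raising this to the power $p$ and integrating in $x$, I would apply the assumption $\int_{\R^{2d}}|v|^2 f\,dvdx \le M$, i.e. $\int_{\R^d} e(x)\,dx \le M$. Since the exponent on $e(x)$ is $\frac{pd}{d+2}$, the integral $\int e(x)^{pd/(d+2)}\,dx$ is controlled by $\|e\|_{L^1}$ precisely when $\frac{pd}{d+2}\le 1$, that is $p \le \frac{d+2}{d}$, and one obtains a finite bound uniformly in $t$ on the half-open interval $p\in[1,\frac{d+2}{d})$. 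The same argument applied to $j(x)=\int vf\,dv$, using $|v|f \le R f$ on $\{|v|\le R\}$ and $|v|f \le R^{-1}|v|^2 f$ on $\{|v|>R\}$, produces $j(x) \le C M^{\frac{1}{d+2}} e(x)^{\frac{d+1}{d+2}}$, whence the exponent condition becomes $\frac{p(d+1)}{d+2}\le 1$, giving the range $p\in[1,\frac{d+2}{d+1})$.

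The only subtlety, which I would treat with care, is the endpoint behavior and the borderline integrability: the interpolation gives $\|\rho(t)\|_{L^{(d+2)/d}}$ only in terms of $\|e(t)\|_{L^1}$ when the exponents match exactly, so the strict inequality $p<\frac{d+2}{d}$ is essential and the constant $C=C(M)$ must be tracked through the optimization to confirm it depends only on $M$ and not on $f$ itself. Taking the supremum over $t\in[0,T]$ is immediate since both bounding quantities $M$ and $\int|v|^2 f\,dvdx$ are assumed uniform in time. The main obstacle is simply organizing the two-sided splitting cleanly so that the optimization in $R$ is justified for almost every $x$, including where $e(x)=0$ (where $\rho(x)=0$ trivially), rather than any deep analytic difficulty; the result is the classical interpolation between the kinetic energy and the $L^\infty$ norm.
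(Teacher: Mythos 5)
Your pointwise splitting and optimization in $R$ are correct and constitute the classical kinetic interpolation argument; note that the paper itself gives no proof of this lemma (it defers to \cite{KMT-2012-1}), and this is indeed the standard route. The bounds
\begin{equation*}
\rho(x)\le C M^{\frac{2}{d+2}}e(x)^{\frac{d}{d+2}},\qquad |j(x)|\le C M^{\frac{1}{d+2}}e(x)^{\frac{d+1}{d+2}}
\end{equation*}
are exactly right, and raising them to the appropriate power and integrating yields the \emph{endpoint} estimates $\|\rho(t)\|_{L^{(d+2)/d}(\R^d)}\le C(M)$ and $\|j(t)\|_{L^{(d+2)/(d+1)}(\R^d)}\le C(M)$, uniformly in $t$.

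The gap is in the step where you pass to exponents $p$ strictly below the endpoint. You assert that $\int_{\R^d} e(x)^{\theta}\,dx$ with $\theta=\frac{pd}{d+2}\le 1$ is ``controlled by $\|e\|_{L^1}$''; this is true only at equality $\theta=1$. For $\theta<1$ it is false on the unbounded domain $\R^d$: an $L^1(\R^d)$ function raised to a power $\theta<1$ need not be integrable (take $e(x)=(1+|x|)^{-d-\delta}$ with $0<\delta<d(1-\theta)/\theta$; then $e\in L^1(\R^d)$ but $\int_{\R^d} e^\theta\,dx=\infty$). On a bounded domain your claim would follow from H\"older's inequality, which is presumably the source of the slip. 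So your argument proves only the endpoint cases $p=\frac{d+2}{d}$ and $p=\frac{d+2}{d+1}$, not the stated ranges down to $p=1$. To obtain those you need, in addition, a mass bound $\|\rho(t)\|_{L^1(\R^d)}=\|f(t)\|_{L^1(\R^{2d})}\le C$ (and then $\|j(t)\|_{L^1}\le \|\rho(t)\|_{L^1}^{1/2}\left(\int_{\R^{2d}} |v|^2f\,dv\,dx\right)^{1/2}$ by Cauchy--Schwarz), after which Lebesgue interpolation between $L^1$ and the endpoint spaces gives every intermediate $p$. This mass bound is genuinely additional information: it does not follow from the two stated hypotheses, as the example $f=M\mathbf{1}_{\{|x|\le L\}}\mathbf{1}_{\{|v|\le \epsilon\}}$ with $L^d\epsilon^{d+2}$ held fixed shows --- both hypotheses hold uniformly while $\|\rho\|_{L^p}\to\infty$ as $\epsilon\to0$ for every $p<\frac{d+2}{d}$. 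In the paper's setting it is supplied by conservation of mass, $\|f^r(t)\|_{L^1}=\|f_0\|_{L^1}$, so the fix is harmless there, but as a self-contained proof of the lemma as stated your argument (and, strictly speaking, the lemma's literal statement) needs this hypothesis made explicit.
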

%

\subsection{The Velocity Averaging Lemma}
When passing to the limit in \eqref{eq:22}, the main obstacle 
is to obtain compactness of the product $f\widetilde u^r$. 
The instrument we will use to obtain this is the 
celebrated velocity averaging lemma. We 
will use the following version due to Perthame \& Souganidis \cite{PS-1998}.

\begin{proposition}\label{pro:velocity}
Let $\{f^n\}_n$ be bounded in $L_\text{loc}^p(\R^{2d+1})$ with $1 < p < \infty$, 
and $\{G^n\}_n$ be bounded in $L_\text{loc}^p(\R^{2d+1})$. If 
$f^n$ and $G^n$ satisfy
$$
f^n_t + v\cdot \Grad_x f^n = \Grad_v^k G^n, \qquad f^n|_{t=0} = f^0 \in L^p(\R^{2d}),
$$
for some multi-index $k$ and $\vphi \in C^{|k|}_\text{c}(\R^{2d})$,
then $\{\vr_\vphi^n\}$ is relatively compact in $L^p_\text{loc}(\R^{d+1})$.
\end{proposition}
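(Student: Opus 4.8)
This is the classical velocity averaging lemma, and I would prove it by the Fourier-analytic route of DiPerna--Lions--Meyer, refined to the full $L^p$ scale by Perthame--Souganidis. The overall plan is to show that the averaging map $f^n\mapsto \vr_\vphi^n=\int f^n\vphi\,dv$ produces a uniform gain of fractional regularity in the $(t,x)$ variables: I would establish a uniform bound $\norm{\vr_\vphi^n}_{W^{s,p}_{\mathrm{loc}}(\R^{d+1})}\le C$ for some exponent $s=s(p,|k|)>0$, and then conclude relative compactness in $L^p_{\mathrm{loc}}(\R^{d+1})$ from the compact Rellich--Kondrachov embedding $W^{s,p}_{\mathrm{loc}}\hookrightarrow L^p_{\mathrm{loc}}$. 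A preliminary localization in time (multiplying by a cutoff $\psi(t)$) turns the fixed initial datum $f^0$ into an additional zeroth-order bulk source $\psi'f^n$, which is again bounded in $L^p_{\mathrm{loc}}$ and only improves the estimate; so it suffices to treat the stationary-type problem driven by $\Grad_v^k G^n$ on the whole line in $t$.

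The heart of the matter is the estimate for $p=2$. Taking the Fourier transform in $(t,x)$, with dual variables $(\tau,\xi)$, the operator $\Pt+v\cdot\Grad_x$ becomes multiplication by the symbol $i(\tau+v\cdot\xi)$, so that $\widehat{f^n}=\widehat{\Grad_v^k G^n}\big/\bigl(i(\tau+v\cdot\xi)\bigr)$ and $\widehat{\vr_\vphi^n}(\tau,\xi)=\int \widehat{f^n}(\tau,\xi,v)\,\vphi(v)\,dv$. Fixing a smooth cutoff $\chi$ and a threshold $\delta>0$, I split the $v$-integral into a degenerate part, where $\chi\!\left(\tfrac{\tau+v\cdot\xi}{\delta}\right)$ localizes to $\{|\tau+v\cdot\xi|\lesssim\delta\}$, and a non-degenerate part. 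On the non-degenerate part I divide by the symbol and integrate by parts $|k|$ times in $v$, transferring $\Grad_v^k$ onto $\vphi\,(1-\chi)/\bigl(i(\tau+v\cdot\xi)\bigr)$; this is precisely where the hypothesis $\vphi\in C_c^{|k|}$ is used, and it yields a factor involving negative powers of $\delta$ against $\norm{G^n}$. On the degenerate part I use Cauchy--Schwarz together with the non-degeneracy of $v\mapsto v\cdot\xi$: on the bounded set $\mathrm{supp}\,\vphi$ the slab $\{|\tau+v\cdot\xi|\lesssim\delta\}$ has measure $\lesssim\delta/|\xi|$, which controls this term by $\norm{f^n}$. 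Optimizing $\delta$ as a power of $|\xi|$ balances the two contributions and produces a uniform factor $(1+|\xi|)^{-s}$, $s>0$; Plancherel and Hölder in $(\tau,\xi)$ then give $\norm{\vr_\vphi^n}_{H^s}\lesssim \norm{f^n}_{L^2}^{1-\theta}\norm{G^n}_{L^2}^{\theta}$.

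The main obstacle is passing from $p=2$ to general $1<p<\infty$, where Plancherel is unavailable. Here I would follow Perthame--Souganidis and run the splitting dyadically. Performing a Littlewood--Paley decomposition in $(t,x)$, I localize $\xi$ to a dyadic shell $|\xi|\sim 2^{j}$, choose the threshold $\delta=\delta(2^{j})$ accordingly, and recognize the resulting averaging operator on each shell as a Fourier multiplier whose symbol satisfies Mikhlin--Hörmander bounds uniformly in $j$. The Mikhlin--Hörmander theorem then yields $L^p$-boundedness on each shell with a geometric gain $2^{-sj}$, and summing the dyadic pieces produces the uniform $W^{s,p}_{\mathrm{loc}}$ bound. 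The measure estimate for the degenerate slab and the integration-by-parts step are unchanged, but verifying the multiplier conditions — differentiating the symbol in $(\tau,\xi)$ the required number of times while keeping track of the negative powers of $\tau+v\cdot\xi$ produced on the non-degenerate region — is the genuine technical core. Once the uniform $W^{s,p}_{\mathrm{loc}}$ bound is in hand, the compact embedding closes the argument and gives relative compactness of $\{\vr_\vphi^n\}$ in $L^p_{\mathrm{loc}}(\R^{d+1})$.
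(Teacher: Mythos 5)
Your proposal should first be set against the right benchmark: the paper itself gives no proof of Proposition \ref{pro:velocity} --- it is quoted verbatim as a known theorem of Perthame and Souganidis \cite{PS-1998} --- so the comparison is with that published proof, not with anything in this paper. Your $p=2$ step is the classical Fourier-splitting argument and is correct: the slab estimate $\abs{\{v\in\operatorname{supp}\vphi:\abs{\tau+v\cdot\xi}\lesssim\delta\}}\lesssim\delta/\abs{\xi}$, the $\abs{k}$ integrations by parts in $v$ against $\vphi\in C^{\abs{k}}_c$, and the optimization in $\delta$ are exactly the standard route to the $H^s$ gain, $s=s(\abs{k})>0$.

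The gap is the passage to $p\neq2$, and it is not a deferrable ``technical core'': it is the whole content of the theorem. The map $f^n\mapsto\vr^n_\vphi$ sends functions of $(t,x,v)$ to functions of $(t,x)$; it is not a Fourier multiplier on $\R^{d+1}$ in any sense to which Mikhlin--H\"ormander applies. After your splitting, both pieces have the form $h\mapsto\int_{\R^d}m(\tau,\xi,v)\widehat h(\tau,\xi,v)\,dv$, i.e.\ $v$-integrals of $v$-dependent symbols --- operator-valued multipliers on $L^p_{t,x}(L^p_v)$ --- and for those, pointwise symbol bounds do not suffice (one needs $R$-boundedness of the symbol family, which is precisely what degenerates here). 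The two natural reductions to the scalar case both fail. If you pull the $v$-integral outside the $L^p_{t,x}$ norm by Minkowski, the factor $\delta/\abs{\xi}$ --- the $v$-measure of the degenerate slab, which is the sole source of the gain --- disappears, and the degenerate part becomes $O(1)$. If instead you fix $v$ and try to verify the Mikhlin condition for the truncated symbols, it fails: every derivative transversal to the characteristic set $\{\tau+v\cdot\xi=0\}$ costs a factor $\delta^{-1}$, and since the optimization forces $\delta\ll\abs{\xi}$ on the shell $\abs{\xi}\sim2^j$, this is far larger than the required $\abs{(\tau,\xi)}^{-1}$; integrating by parts in $v$ to repair this spends derivatives of $\vphi$, of which only $\abs{k}$ exist, and in any case puts you back in the operator-valued situation.

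Relatedly, your target intermediate statement --- a uniform bound $\norm{\vr^n_\vphi}_{W^{s,p}_{\mathrm{loc}}}\le C$ with $s=s(p,\abs{k})>0$ for all $1<p<\infty$ and arbitrary $\abs{k}$, followed by Rellich --- is stronger than what the theorem asserts and is exactly what is \emph{not} available: the title of \cite{PS-1998}, ``a limiting case for velocity averaging,'' refers to the fact that for sources $\Grad_v^kG^n$ with $G^n\in L^p$ the regularity exponents produced by the known $L^p$ machinery degenerate, and the point of that paper is that compactness of the averages survives even though a uniform Sobolev gain does not. Their proof does not run through Littlewood--Paley and multiplier theorems; in the tradition of DiPerna--Lions--Meyer it works with a decomposition of $f^n$ built from the transport operator itself, $f^n=\lambda(\lambda+T)^{-1}f^n+(\lambda+T)^{-1}\Grad_v^kG^n$ with $T=\partial_t+v\cdot\Grad_x$, spending the $\abs{k}$ derivatives of $\vphi$ against the explicit kernel of the resolvent, and combines the $L^2$ lemma with real-interpolation and compactness arguments calibrated to this limiting exponent. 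So what is missing from your proposal is not a verification but a different mechanism for $p\neq2$.
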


The previous proposition cannot be directly applied 
to obtain the needed compactness. In fact, 
we will rely on the following lemma which can be 
seen as a corollary of the previous proposition. 
The proof can be found in \cite{KMT-2012-1}.

\begin{lemma}\label{cor:velocity}
Let $\{f^n\}_n$ and $\{G^n\}_n$ be as in Proposition \ref{pro:velocity} 
and assume that
\begin{equation*}
f^n \mbox{ is bounded in } L^\infty(\R^{2d+1}),
\end{equation*}
\begin{equation*}
 (|v|^2+\Psi)f^n \mbox{ is bounded in } L^\infty(0,T;L^1(\R^{2d+1})).
\end{equation*}
Then, for any $\vphi(v)$ such that $|\vphi(v)| \leq c|v|$ and $q < \frac{d+2}{d+1}$, the sequence
\begin{equation}\label{eq:sec}
	\Set{\int_{\R^d}f^n \vphi(v)~dv}_n,
\end{equation}
is relatively compact in $L^q((0,T)\times \R^d)$.
\end{lemma}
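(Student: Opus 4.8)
The plan is to reduce matters to the velocity averaging Proposition \ref{pro:velocity}, which delivers compactness only for averages against test functions with compact support in $v$, by truncating the linearly growing weight $\vphi$ and controlling the discarded large-velocity tail through the uniform second moment bound. Throughout I write $g^n:=\int_{\R^d}f^n\vphi(v)\,dv$ and fix an exponent $q_0$ with $q<q_0<\frac{d+2}{d+1}$. I first record the uniform bounds. Since $\abs{\vphi(v)}\le c\abs{v}$ we have $\abs{g^n}\le c\int_{\R^d}f^n\abs{v}\,dv$, and the two hypotheses $\norm{f^n}_{L^\infty}\le M$ and $\int_{\R^d}f^n\abs{v}^2\,dv\le M$ allow one to argue exactly as in Lemma \ref{lem:jn} (splitting the velocity integral at $\abs{v}=\lambda$ and optimizing in $\lambda$) to conclude that $\{g^n\}$ is bounded in $L^\infty(0,T;L^p(\R^d))$ for every $p<\frac{d+2}{d+1}$. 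As $(0,T)$ is finite, $\{g^n\}$ is then bounded in $L^1\cap L^{q_0}((0,T)\times\R^d)$, and by interpolation in $L^q$ as well.

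Next I fix a smooth cutoff $\chi_R(v)$ equal to $1$ on $\{\abs{v}\le R\}$ and supported in $\{\abs{v}\le 2R\}$, and split $g^n=g^n_R+h^n_R$ with $g^n_R=\int_{\R^d}f^n\vphi\chi_R\,dv$ and $h^n_R=\int_{\R^d}f^n\vphi(1-\chi_R)\,dv$. The tail obeys the pointwise bound
$$\abs{h^n_R}\le c\int_{\abs{v}\ge R}f^n\abs{v}\,dv\le\frac{c}{R}\int_{\R^d}f^n\abs{v}^2\,dv,$$
so that $\norm{h^n_R}_{L^1((0,T)\times\R^d)}\le CR^{-1}$ uniformly in $n$; combined with the uniform $L^{q_0}$ bound of the previous step, interpolation gives $\norm{h^n_R}_{L^q((0,T)\times\R^d)}\le CR^{-\theta}$ for some $\theta\in(0,1)$, which tends to $0$ as $R\to\infty$, uniformly in $n$.

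For $R$ fixed, $\vphi\chi_R$ has compact support in $v$; approximating it in $L^1(\R^d_v)$ by some $\psi\in C_c^{\abs{k}}(\R^d)$ produces, on any ball $B\subset\R^d$, an error bounded in $L^q((0,T)\times B)$ by $C(B,T)\norm{f^n}_{L^\infty}\norm{\vphi\chi_R-\psi}_{L^1_v}$, hence uniformly small. Proposition \ref{pro:velocity} applies to $\int_{\R^d}f^n\psi\,dv$ and yields relative compactness in $L^p_{\mathrm{loc}}(\R^{d+1})$; interpolation with the uniform $L^{q_0}$ bound upgrades this to relative compactness of $\{g^n_R\}$, and then of $\{g^n\}$, in $L^q((0,T)\times B)$ for every ball $B$. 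It remains to prevent escape of mass at spatial infinity: from $\int_{\R^{2d}}(\abs{v}^2+\Psi)f^n\,dv\,dx\le M$ and $\Psi\to\infty$ as $\abs{x}\to\infty$ one gets $\int_{\abs{x}>R}\int f^n\,dv\,dx\le M/\inf_{\abs{x}>R}\Psi$, whence by Cauchy--Schwarz $\int_{\abs{x}>R}\int f^n\abs{v}\,dv\,dx\to0$ uniformly in $n$ and $t$. Interpolating this $L^1$ tightness against the $L^{q_0}$ bound yields $\norm{g^n}_{L^q((0,T)\times\{\abs{x}>R\})}\to0$ uniformly in $n$. A standard exhaustion and diagonal extraction then combines the local compactness with this tightness to give relative compactness of $\{g^n\}$ in $L^q((0,T)\times\R^d)$.

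The crux of the argument is the mismatch between the linear growth permitted for $\vphi$ and the compact-support requirement of Proposition \ref{pro:velocity}: everything hinges on the uniform second moment bound, which forces the velocity tail $h^n_R$ to be of size $R^{-1}$ and therefore negligible. The unboundedness of the spatial domain is a secondary, and more routine, difficulty, resolved by the confining potential $\Psi$.
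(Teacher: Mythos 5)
Your proof is correct. The paper itself does not actually prove Lemma \ref{cor:velocity} --- it defers the proof to \cite{KMT-2012-1} --- but your argument (cut off the linearly growing weight $\vphi$ in velocity, smooth the truncated weight so that Proposition \ref{pro:velocity} applies, control the velocity tail by the uniform second-moment bound, and obtain spatial tightness from the confinement potential $\Psi$, gluing everything by interpolation and total boundedness) is exactly the strategy of that reference, and each step you invoke is sound.
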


\subsection{An important technical lemma}
In view of Lemma \ref{lem:jn} and \ref{cor:velocity}, it is clear that in order to get 
convergence results for $f^r$ and its moments, we will need to obtain some estimate on $f^r$ that are uniform with respect to $r$. 
The main difficulty will be to show that the energy estimate (\ref{eq:fEr}) holds with constants independent on $r$ (which does  not obviously follows  from the result of \cite{KMT-2012-1}).
For this we will make use of the following technical lemma, which can be found in \cite{KMT-2012-1}
(the proof is given below for completeness):

\begin{lemma}\label{lem:MT}
Assume that there exists $0<R_1<R_2<\infty$ such that 
\begin{equation}\label{eq:condphi} 
K(x) >0 \mbox{ for } |x|\leq R_1\, , \qquad  K(x)=0  \quad \mbox{ for } |x|\geq R_2.
\end{equation}
There exists a constant 
\begin{equation}\label{eq:CC}
C\sim \frac{\sup_{B_{R_2}} K}{\inf_{B_{R_1}} K}  \left( \frac{R_2}{R_1}\right)^d 
\end{equation}
such that
$$ 
\int_{\R^{d}} K(x-y)   \frac{\rho(x)}{\int_{\R^{d}} K(x-z) \rho(z)\, dz} \, dx \leq C, \quad \forall y\in \R^d,
$$
for all nonnegative functions $\rho\in L^1(\R^d)$.
\end{lemma}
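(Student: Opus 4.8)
The plan is to estimate the integrand pointwise and thereby reduce the claim to a purely geometric covering estimate that no longer involves $K$. First I would exploit the support and positivity hypotheses \eqref{eq:condphi}. In the numerator, $K(x-y)$ vanishes unless $|x-y|<R_2$, and there $K(x-y)\leq \sup_{B_{R_2}}K$. In the denominator, since $K(x-z)\geq \inf_{B_{R_1}}K$ whenever $|x-z|\leq R_1$, I would bound from below
$$\int_{\R^d} K(x-z)\rho(z)\,dz \;\geq\; \Big(\inf_{B_{R_1}}K\Big)\int_{B(x,R_1)}\rho(z)\,dz,$$
where $B(x,R_1)$ denotes the ball of radius $R_1$ centered at $x$. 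Combining these two bounds factors out exactly the prefactor $\frac{\sup_{B_{R_2}}K}{\inf_{B_{R_1}}K}$ appearing in \eqref{eq:CC}, and reduces the lemma to showing that
$$\int_{B(y,R_2)} \frac{\rho(x)}{\int_{B(x,R_1)}\rho(z)\,dz}\,dx \;\leq\; C'\Big(\tfrac{R_2}{R_1}\Big)^d$$
uniformly in $y\in\R^d$ and in all nonnegative $\rho\in L^1(\R^d)$.

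Next I would prove this reduced estimate by a covering argument. I would cover $B(y,R_2)$ by finitely many balls $B(x_i,R_1/2)$, $i=1,\dots,N$, whose number is controlled by $N\leq C'(R_2/R_1)^d$ for a dimensional constant $C'$ (the standard covering number of a ball of radius $R_2$ by balls of radius $R_1/2$). The key geometric observation is that if $x\in B(x_i,R_1/2)$ then $B(x_i,R_1/2)\subseteq B(x,R_1)$, by the triangle inequality, so the averaging denominator dominates the mass of the small ball:
$$\int_{B(x,R_1)}\rho \;\geq\; \int_{B(x_i,R_1/2)}\rho .$$
Consequently, on each covering ball the quotient integrates to at most one,
$$\int_{B(x_i,R_1/2)} \frac{\rho(x)}{\int_{B(x,R_1)}\rho}\,dx \;\leq\; \frac{\int_{B(x_i,R_1/2)}\rho(x)\,dx}{\int_{B(x_i,R_1/2)}\rho}\;=\;1,$$
with the convention that the contribution vanishes when the local mass $\int_{B(x_i,R_1/2)}\rho$ is zero (in which case $\rho=0$ a.e. there). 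Summing over the $N$ balls and using $\rho\geq 0$ to pass from $B(y,R_2)$ to the union of the covering balls yields the reduced estimate with constant $N\leq C'(R_2/R_1)^d$.

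I expect the only delicate point to be the covering step: one must choose the radii so that the small balls used in the cover sit inside the averaging balls $B(x,R_1)$ for \emph{every} $x$ they contain, which is precisely what forces the half-radius $R_1/2$ and, through the covering number, the power $(R_2/R_1)^d$ in \eqref{eq:CC}. Everything else follows directly from the positivity and support hypotheses on $K$; in particular no regularity of $\rho$ beyond $\rho\in L^1$, $\rho\geq 0$ is used, and the bound is manifestly uniform in $y$ since the covering number depends only on the ratio $R_2/R_1$ and on the dimension $d$.
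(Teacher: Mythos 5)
Your proof is correct and follows essentially the same route as the paper's: bound $K(x-y)$ above by $\sup_{B_{R_2}}K$ on its support, cover $B_{R_2}(y)$ by $N\sim (R_2/R_1)^d$ balls of radius $R_1/2$, and bound the denominator below via $\inf_{B_{R_1}}K$ times the mass of the small ball containing $x$, so each covering ball contributes at most $1$. The only (cosmetic) difference is that you strip $K$ out of the denominator before the covering step, whereas the paper does it after; you also make explicit the harmless zero-mass convention that the paper leaves implicit.
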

The most important part of this lemma is the formula \eqref{eq:CC}, which implies that if we replace the function $K$ with  $\alpha K(\beta x)$, for any $\alpha>0$ and $\beta>0$, then
the same results holds with the same constant.

We deduce:
\begin{corollary}\label{cor:Rr}
Assume that $K^r$ is given by \eqref{eq:Rr} where $R$ satisfies \eqref{eq:R}. Then, there exists a constant $C$ independent of $r$ such that
$$ 
\int_{\R^{d}}  K^r(x-y)   \frac{\rho(x)}{\int_{\R^{d}} K^r(|x-z|) \rho(z)\, dz} \, dx \leq C, \quad \forall y\in \R^d
$$
for all nonnegative functions $\rho\in L^1(\R^d)$.
\end{corollary}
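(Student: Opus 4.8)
The plan is to reduce the statement directly to Lemma~\ref{lem:MT} by a rescaling argument, exploiting the scale invariance of the constant \eqref{eq:CC} emphasized in the remark following that lemma. First I would check that $K$ itself satisfies the hypotheses of Lemma~\ref{lem:MT}. Since $K\in C_c(\R^d)$ has compact support, there is an $R_2<\infty$ with $K(x)=0$ for $|x|\geq R_2$; and since $K$ is continuous with $K(0)>0$, there is an $R_1\in(0,R_2)$ such that $K>0$ on the closed ball $\overline{B_{R_1}}$, so that $\inf_{B_{R_1}}K>0$. Thus Lemma~\ref{lem:MT} applies to $K$, producing a finite constant $C=C(K)$ of the form \eqref{eq:CC}.

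The heart of the argument is the change of variables that turns the $K^r$-integral into the corresponding $K$-integral. Writing $K^r(x)=r^{-d}K(x/r)$ as in \eqref{eq:Rr}, I would substitute $x=r\xi$, $y=r\eta$, $z=r\zeta$ in
$$ I^r(y):=\int_{\R^d} K^r(x-y)\,\frac{\rho(x)}{\int_{\R^d}K^r(x-z)\rho(z)\,dz}\,dx. $$
The factors $r^{-d}$ in numerator and denominator cancel against the Jacobians $r^d$, and setting $\tilde\rho(\zeta):=\rho(r\zeta)$ (which is again a nonnegative $L^1$ function, with $\int\tilde\rho=r^{-d}\int\rho<\infty$) one is left with
$$ I^r(y)=\int_{\R^d} K(\xi-\eta)\,\frac{\tilde\rho(\xi)}{\int_{\R^d}K(\xi-\zeta)\tilde\rho(\zeta)\,d\zeta}\,d\xi. $$
This is precisely the quantity controlled by Lemma~\ref{lem:MT} applied to $K$ and to the density $\tilde\rho$. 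Since that lemma holds for \emph{every} nonnegative $\rho\in L^1(\R^d)$ with the same constant $C(K)$, we conclude $I^r(y)\leq C(K)$ for all $y$ and all $r>0$. Equivalently, one may invoke the remark directly: $K^r=\alpha K(\beta\,\cdot)$ with $\alpha=r^{-d}$, $\beta=1/r$, so the associated constant is unchanged.

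The reason no $r$-dependence survives is already visible at the level of \eqref{eq:CC}: replacing $K$ by $K^r$ rescales the admissible radii to $R_1/r,\,R_2/r$, leaving the ratio $R_2/R_1$ fixed, while the overall amplitude $r^{-d}$ cancels in the quotient $\sup K^r/\inf K^r$. There is no genuine obstacle here beyond bookkeeping; the only points requiring a little care are confirming that admissible radii $R_1,R_2$ for $K$ exist (which is exactly where the hypotheses $K\in C_c(\R^d)$ and $K(0)>0$ from \eqref{eq:R} enter) and verifying that the rescaled density $\tilde\rho$ still lies in the class of functions for which Lemma~\ref{lem:MT} was established.
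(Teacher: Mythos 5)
Your proposal is correct and follows essentially the same route as the paper: the corollary is deduced there directly from Lemma~\ref{lem:MT} via the remark that replacing $K$ by $\alpha K(\beta\,\cdot)$ (here $\alpha=r^{-d}$, $\beta=1/r$) leaves the constant \eqref{eq:CC} unchanged, which is precisely your scaling argument. Your explicit change of variables $x=r\xi$, $y=r\eta$, $z=r\zeta$ with $\tilde\rho(\zeta)=\rho(r\zeta)$, and the verification that \eqref{eq:R} yields admissible radii $R_1,R_2$ for $K$, simply make the paper's implicit deduction fully rigorous.
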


\begin{proof}[Proof of Lemma \ref{lem:MT}]
We recall that $\tilde \rho(x) = \int_{\R^{d}} K(x-z) \rho(z)\, dz $ and we note that
$$\int_{\R^{d}} K(x-y)   \frac{\rho(x)}{\tilde \rho(x)} \, dx \leq  (\sup K) \int_{ B_{R_2}(y)}  \frac{\rho(x)}{\tilde \rho(x)} \, dx.
$$
Next, we cover $B_{R_2}(y)$ with balls of radius $R_1/2$: We have 
$$ B_{R_2}(y)\subset \bigcup_{i=1}^N B_{R_1/2}(x_i)$$
with $N\sim(R_2/R_1)^d$.
We can thus write
$$\int_{\R^{d}} K(x-y)   \frac{\rho(x)}{\tilde \rho(x)} \, dx \leq 
 (\sup K)\sum_{i=1}^N \int_{B_{R_1/2}(x_i)}  \frac{\rho(x)}{\tilde \rho(x)} \, dx.
$$
Moreover, clearly, 
\begin{eqnarray*}
\tilde \rho(x) = \int_{\R^{d}} K(x-z)   \rho(z) \, dz\geq \int_{B_{R_1/2}(x_i)}  K(x-z)   \rho(z) \, dz .
\end{eqnarray*}

By combining the two previous inequalities, we see that
$$\int_{\R^{d}} K(x-y)   \frac{\rho(x)}{\tilde \rho(x)} \, dx \leq 
 (\sup K)\sum_{i=1}^N \int_{B_{R_1/2}(x_i)}  \frac{\rho(x)}{\int_{B_{R_1/2}(x_i)}  K(x-z)   \rho(z) \, dz} \, dx.
$$
Now, using the fact that when  $ x,z\in B_{R_1/2}(x_i)$ we have $|x-z|\leq R_1$, we deduce
\begin{align*}
\int_{\R^{d}} K(x-y)   \frac{\rho(x)}{\tilde \rho(x)} \, dx& \leq 
 \frac{\sup K}{\inf_{B_{R_1}(0)} K} \sum_{i=1}^N \int_{B_{R_1/2}(x_i)}  \frac{\rho(x)}{\int_{B_{R_1/2}(x_i)}   \rho(z) \, dz} \, dx\\
 & \leq   \frac{\sup K}{\inf_{B_{R_1}(0)} K}  N \leq C  \frac{\sup K}{\inf_{B_{R_1}(0)} K}  \left( \frac{R_2}{R_1}\right)^d
\end{align*} 
and the proof is complete.
\end{proof}

\subsection{A priori estimate}
We can now conclude this preliminary section by proving that $f^r$ satisfies some a priori estimates uniformly with respect to $r$.
We recall that the energy functional is defined
\begin{equation}
	\mathcal{E}(t) = \int_{\R^{2d}}\left(\frac{|v|^2}{2} + \Psi(x)\right) f(t,x,v)~dvdx.
\end{equation}
We then prove:
\begin{proposition}[Energy bound]\label{pro:energy}
Let $0 \leq f_0 \in L^1(\R^{2d})\cap L^\infty(\R^{2d})$ be given, let 
$T$ be a finite final time, and let $f$ be the corresponding weak solution 
of \eqref{eq:1}. There is a constant $C > 0$ independent of $r>0$ such that  such that
\begin{equation}\label{eq:fLpunif}
	\|f\|_{L^\infty(0,T;L^p(\R^{2d}))} \leq e^{\frac{CT}{p'}}\|f_0\|_{L^p(\R^{2d})},
\end{equation}
and 
\begin{equation}\label{eq:fE}
	\begin{split}
		&\sup_{t\in (0,T)}\mathcal{E}(t) + \frac{1}{2}\int_{\R^{2d}}f|\widetilde u - v|^2~dvdx \\
		&\qquad \qquad + \frac{1}{2}\int_{\R^{4d}}\Phi(x-y)f(x,v)f(y,w)|w-v|^2~dwdydvdx \leq C(T)\mathcal{E}(0).
	\end{split}
\end{equation}
\end{proposition}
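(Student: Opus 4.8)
The plan is to prove the two estimates separately, in each case carrying out the natural computation while tracking carefully how the constants depend on $r$ and showing that this dependence is spurious.

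First I would establish the $L^p$ bound \eqref{eq:fLpunif}. Writing \eqref{eq:22} in conservative form $f_t + \Div_{x,v}(f\,b)=0$ with phase-space drift $b=(v,\,-\Grad_x\Psi+F[f]+\widetilde u^r-v)$, and renormalizing (multiplying by $pf^{p-1}$ and integrating), one gets
\[
\frac{d}{dt}\int_{\R^{2d}} f^p\,dv\,dx = -(p-1)\int_{\R^{2d}} f^p\,\Div_{x,v}b\,dv\,dx .
\]
The decisive observation is that $\Div_{x,v}b=\Div_v F[f]-d=-d\,(\Phi*\rho)-d$, since both $\widetilde u^r(x)$ and $\Grad_x\Psi(x)$ are independent of $v$; in particular the Motsch--Tadmor velocity $\widetilde u^r$ contributes nothing at all to the divergence. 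As $\Phi\ge0$ and the total mass $\norm{f}_{L^1}=\norm{f_0}_{L^1}$ is conserved, $-\Div_{x,v}b\le d\,(1+\norm{\Phi}_{L^\infty}\norm{f_0}_{L^1})$, a bound independent of $r$, and Grönwall's inequality yields \eqref{eq:fLpunif}.

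For the energy bound I would test \eqref{eq:22} with $\tfrac{|v|^2}{2}+\Psi(x)$ — formally, and rigorously through the approximation scheme used to construct the solutions in \cite{KMT-2012-1}. After integration by parts the free transport and confinement terms cancel identically; the Cucker--Smale term, symmetrized in $(x,v)\leftrightarrow(y,w)$ using that $\Phi$ is even, produces the dissipation $-\tfrac12\int\Phi(x-y)f(x,v)f(y,w)|v-w|^2$; and the local alignment term gives $\int f\,v\cdot(\widetilde u^r-v)\,dv\,dx$. Completing the square via $v\cdot(\widetilde u^r-v)=-|v-\widetilde u^r|^2+\widetilde u^r\cdot(\widetilde u^r-v)$ and using $\int f(\widetilde u^r-v)\,dv=\rho\widetilde u^r-j$, this leads to the identity
\[
\frac{d}{dt}\mathcal{E}+\frac12\int_{\R^{4d}}\Phi(x-y)f(x,v)f(y,w)|v-w|^2+\int_{\R^{2d}} f|v-\widetilde u^r|^2=\int_{\R^d}\widetilde u^r\cdot(\rho\widetilde u^r-j)\,dx .
\]

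The crux, and the main obstacle, is to control the right-hand side by $\mathcal{E}$ uniformly in $r$. I would first bound $\int_{\R^d}\rho|\widetilde u^r|^2\,dx$: since $\widetilde u^r(x)=\int u(y)\,d\mu_x(y)$ is the average of $u=j/\rho$ against the probability measure $d\mu_x(y)=K^r(x-y)\rho(y)\,dy/\widetilde\rho^r(x)$, Jensen's inequality for $\xi\mapsto|\xi|^2$ gives $|\widetilde u^r(x)|^2\le \widetilde\rho^r(x)^{-1}\int K^r(x-y)\rho(y)|u(y)|^2\,dy$; multiplying by $\rho(x)$, integrating, and applying Fubini, the inner $x$-integral is exactly the quantity bounded in Corollary \ref{cor:Rr},
\[
\int_{\R^d} K^r(x-y)\frac{\rho(x)}{\widetilde\rho^r(x)}\,dx\le C,
\]
with $C$ \emph{independent of $r$} by virtue of the scaling identity \eqref{eq:CC}. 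Hence $\int\rho|\widetilde u^r|^2\le C\int\rho|u|^2=C\int|j|^2/\rho\le C\int f|v|^2\le C\mathcal{E}$, and a Cauchy--Schwarz estimate then gives $\int\widetilde u^r\cdot j\le(\int\rho|\widetilde u^r|^2)^{1/2}(\int|j|^2/\rho)^{1/2}\le C\mathcal{E}$, so the whole right-hand side is $\le C\mathcal{E}(t)$ with $C$ independent of $r$. It is precisely here that the uniformity in $r$ of Corollary \ref{cor:Rr} — the entire reason for Lemma \ref{lem:MT} — is indispensable: without it the renormalization by $\widetilde\rho^r$ would ruin the bound as $r\to0$. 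Discarding the two nonnegative dissipation terms gives $\tfrac{d}{dt}\mathcal{E}\le C\mathcal{E}$, whence $\sup_{(0,T)}\mathcal{E}\le e^{CT}\mathcal{E}(0)$ by Grönwall; reinserting this into the identity and integrating over $(0,T)$ bounds the (time-integrated) dissipation terms by $C(T)\mathcal{E}(0)$, which is \eqref{eq:fE}.
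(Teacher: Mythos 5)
Your proof is correct and follows essentially the same route as the paper: the same renormalization/Gr\"onwall argument for the $L^p$ bound, the same symmetrization of the Cucker--Smale term, and---crucially---the same use of Lemma \ref{lem:MT}/Corollary \ref{cor:Rr} to bound $\sup_y\int K^r(x-y)\rho(x)/\widetilde\rho^r(x)\,dx$ uniformly in $r$, which is the heart of the matter. The only differences are cosmetic: you complete the square as $v\cdot(\widetilde u^r-v)=-|v-\widetilde u^r|^2+\widetilde u^r\cdot(\widetilde u^r-v)$ and control $\int\rho|\widetilde u^r|^2$ by Jensen's inequality through the macroscopic velocity $u=j/\rho$, whereas the paper writes $v\cdot(\widetilde u^r-v)=-\tfrac12|v-\widetilde u^r|^2+\tfrac12|\widetilde u^r|^2-\tfrac12|v|^2$ and bounds $\int\rho|\widetilde u^r|^2$ by Cauchy--Schwarz at the kinetic level (Lemma \ref{lem:Lbound}); both variants yield $\le C\mathcal{E}(t)$ with $C$ independent of $r$ and then conclude by Gr\"onwall exactly as you do.
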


The proof of Proposition \ref{pro:energy} relies 
on two auxiliary results (Lemmas \ref{lem:Lp} and \ref{lem:Lbound} below) which we will prove prior 
to proving the proposition. 
We begin with the $L^p$ estimate \eqref{eq:fLpunif}:
\begin{lemma}\label{lem:Lp}
Let $f$ be a weak solution of \eqref{eq:1}. There is a constant $C$, 
independent of $r$, such that 
\begin{equation}
	\sup_{t \in (0,T)} \|f\|_{L^p(\R^{2d})} \leq \|f_0\|_{L^p(\R^{2d})}e^{CT}.
\end{equation}	
	
\end{lemma}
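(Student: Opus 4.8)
The plan is to derive a renormalized equation for $f^p$ by formally multiplying \eqref{eq:22} by $p f^{p-1}$, integrating over $\R^{2d}$, and closing with Gronwall's inequality. The essential observation—and the reason the resulting constant is uniform in $r$—is that the only $r$-dependent quantity, $\widetilde u^r$, enters \eqref{eq:22} through a term that becomes a perfect $v$-divergence after renormalization and therefore disappears upon integration.

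First I would process each term of \eqref{eq:22} after multiplication by $pf^{p-1}$. Since $v$ is divergence-free in $x$ and $\Grad_x\Psi$ is independent of $v$, the transport terms become pure divergences: $pf^{p-1}\Div_x(vf)=\Div_x(vf^p)$ and $pf^{p-1}\Div_v(\Grad_x\Psi\,f)=\Div_v(\Grad_x\Psi\,f^p)$. For the renormalized alignment term, the key point is that $\widetilde u^r=\widetilde u^r(t,x)$ does not depend on $v$, so that, using $\Div_v(\widetilde u^r-v)=-d$,
$$ pf^{p-1}\Div_v\bigl(f(\widetilde u^r-v)\bigr)=\Div_v\bigl((\widetilde u^r-v)f^p\bigr)-(p-1)d\,f^p. $$
The entire $r$-dependence is confined to the divergence term, while the surviving zeroth-order term $-(p-1)d\,f^p$ is independent of $r$. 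For the Cucker--Smale term, writing $a[f](x)=\int_{\R^{2d}}\Phi(x-y)f(y,w)\,dwdy$ and noting from \eqref{eq:LCS} that $\Div_v F[f]=-d\,a[f]$, one obtains $pf^{p-1}\Div_v(fF[f])=\Div_v(F[f]f^p)-(p-1)d\,a[f]f^p$.

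Integrating over $\R^{2d}$, every divergence term vanishes (by decay at infinity, justified below), and the remaining identity reads
$$ \frac{d}{dt}\int_{\R^{2d}} f^p\,dvdx=(p-1)d\int_{\R^{2d}}\bigl(a[f]+1\bigr)f^p\,dvdx. $$
Integrating \eqref{eq:22} directly shows the total mass is conserved, so $\|f(t)\|_{L^1(\R^{2d})}=\|f_0\|_{L^1(\R^{2d})}$, and since $\Phi$ is bounded we get $a[f](x)\leq\|\Phi\|_{L^\infty}\|f_0\|_{L^1(\R^{2d})}=:C_0$, a bound independent of $r$. Hence $\frac{d}{dt}\int f^p\,dvdx\leq(p-1)d(C_0+1)\int f^p\,dvdx$, and Gronwall's inequality followed by taking the $p$-th root gives $\|f(t)\|_{L^p(\R^{2d})}\leq e^{CT}\|f_0\|_{L^p(\R^{2d})}$ with $C=d(C_0+1)$ independent of $r$, which is the assertion.

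The main obstacle is rigor rather than the computation: the chain rule producing the $f^p$ equation and the vanishing of the divergence terms at infinity are only formal for a general weak solution. I would make these steps precise by a standard renormalization and approximation argument—regularizing $f$ by convolution in the spirit of DiPerna--Lions and passing to the limit—or, more directly, by carrying out the estimate on the regularized solutions constructed in the proof of Proposition \ref{pro:} and passing to the limit in the final inequality. Beyond this technical point, the conceptual content is simply that $\widetilde u^r$ appears solely in divergence form, so the delicate, $r$-sensitive quantity never enters the closed differential inequality for $\int f^p\,dvdx$.
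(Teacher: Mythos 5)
Your proof is correct and is essentially the paper's own argument: the paper multiplies \eqref{eq:1} by $B'(f)$ with $B(f)=f^p$, uses the same key observations that $\Div_v L^r[f]=-d$ (so all $r$-dependence drops out) and $\Div_v F[f]=-d\int\Phi(x-y)f\,dwdy$ bounded via the total mass, and closes with Gronwall exactly as you do. Your explicit treatment of the divergence terms and the remark on rigor via regularization are fine but do not change the substance of the argument.
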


\begin{proof}
Let $B(f)$ be a continuous function and let $b(f) = fB'(f)- B(f)$. 
By multiplying \eqref{eq:1} with $B'(f)$ and integrating, we obtain
	\begin{equation}\label{eq:en1}
		\begin{split}
			\frac{d}{dt}\int_{\R^{2d}} B(f)~dvdx
			& = \int_{\R^{2d}} v\Grad_x b(f)~dvdx  \\
			&\quad + \int_{\R^{2d}} (F(f) + L^r(f)-\Grad_x \Psi)\Grad_v b(f)~dvdx \\
			&= -\int_{\R^{2d}}b(f)\left(\Div_v F(f) + \Div_v L^r(f) \right)~dvdx.
		\end{split}
	\end{equation}
Next, using the definition of the alignment terms, we see that
\begin{equation*}
	\begin{split}
		\Div_v F[f] &= -d\int_{\R^{2d}}\Phi(x-y)f(y,w)~dwdy, \\
		\Div_v L^r[f] &= -d \frac{\int_{\R^{2d}}K^r(x-y)f(y,w)~dwdy}{\int_{R^{2d}}K^r(x-y)f(y,w)~dwdy} = -d.
	\end{split}
\end{equation*}
Setting these identities in \eqref{eq:en1}, we find that
\begin{equation*}
	\begin{split}
		\frac{d}{dt}\int_{\R^{2d}} B(f)~dvdx 
		&= \int_{\R^{2d}}b(f)\left(d+ d\int_{\R^{2d}}\Phi(x-y)f(y,w)~dwdy\right)~dvdx \\
		&\leq \int_{\R^{2d}}b(f)(d  + dM\| \Phi \|_{L^\infty(\R^{2d})})~dvdx,
	\end{split}
\end{equation*}
where $M$ is the total mass.
Next, we let $B(f) = f^p$ such that $b(f)= (p-1)f^p$. An application of 
the Gronwall inequality then provides the bound
\begin{equation*}
	\begin{split}
		\sup_{t\in (0,T)}\|f\|_{L^p(\R^{2d})} \leq \|f_0\|_{L^p(\R^{2d)}}e^{\frac{p-1}{p}CT},
	\end{split}
\end{equation*}
which is what we set out to prove.

\end{proof}

The main difficulty in proving the energy estimate \eqref{eq:fE} (even for $r>0$) is to control the non-symmetric Motsch-Tadmor alignment term. 
This is the goal of the following Lemma, which relies on Lemma \ref{lem:MT}:
\begin{lemma}\label{lem:Lbound}
There is a constant $C$, independent of $r$, such that
	\begin{equation}
		\int_{\R^{2d}}fvL^r[f]~dvdx \leq C\mathcal{E}(t)-\frac{1}{2}\int_{\R^{2d}}f|\widetilde u^r - v|^2~dvdx  .
	\end{equation}
\end{lemma}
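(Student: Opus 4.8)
The plan is to start from the identity $L^r[f]=\widetilde u^r-v$ and expand the integrand algebraically. Since $\widetilde u^r=\widetilde u^r(t,x)$ does not depend on $v$, I would use the pointwise identity
\[
v\cdot(\widetilde u^r-v)=-\tfrac12|\widetilde u^r-v|^2+\tfrac12|\widetilde u^r|^2-\tfrac12|v|^2,
\]
integrate against $f$, and discard the manifestly nonpositive term $-\tfrac12\int f|v|^2\,dv\,dx$. This immediately produces the $-\tfrac12\int f|\widetilde u^r-v|^2$ contribution on the right-hand side and reduces the claim to the single estimate
\[
\tfrac12\int_{\R^{2d}} f\,|\widetilde u^r|^2\,dv\,dx\le C\,\mathcal{E}(t),
\]
with $C$ independent of $r$, which is the heart of the matter.

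To prove this reduced estimate I would first carry out the $v$-integration: because $\widetilde u^r$ depends only on $x$, one has $\int f|\widetilde u^r|^2\,dv\,dx=\int \vr(x)\,|\widetilde u^r(x)|^2\,dx$. The key step is then to control $|\widetilde u^r|^2$ by a weighted average of $|w|^2$ via Jensen's inequality: writing $\widetilde u^r=\widetilde j^r/\widetilde\vr^r$ as the $K^r(x-\cdot)f$-weighted mean of $w$, convexity of $|\cdot|^2$ gives
\[
|\widetilde u^r(x)|^2\le \frac{\int_{\R^{2d}}K^r(x-y)\,|w|^2 f(y,w)\,dw\,dy}{\widetilde\vr^r(x)}.
\]

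With this in hand, I would multiply by $\vr(x)$, integrate in $x$, and swap the order of integration by Fubini to obtain
\[
\int_{\R^d} \vr\,|\widetilde u^r|^2\,dx\le \int_{\R^{2d}}|w|^2 f(y,w)\left(\int_{\R^d}K^r(x-y)\frac{\vr(x)}{\widetilde\vr^r(x)}\,dx\right)dw\,dy.
\]
The inner $x$-integral is exactly the quantity controlled in Corollary \ref{cor:Rr}, which bounds it by a constant $C$ uniform in both $r$ and $y$. Pulling this constant out leaves $C\int |w|^2 f\,dw\,dy\le 2C\,\mathcal{E}(t)$ (using $\Psi\ge0$), which closes the argument.

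I expect the main obstacle to be precisely this uniform-in-$r$ control of $\int\vr|\widetilde u^r|^2$. The Jensen step is routine, but it is essential that the resulting weight $\int K^r(x-y)\,\vr(x)/\widetilde\vr^r(x)\,dx$ stay bounded independently of $r$. This is far from obvious, since both $K^r$ and $\widetilde\vr^r$ degenerate as $r\to0$; it is exactly the scaling-invariance content of Lemma \ref{lem:MT}, transferred to $K^r$ through Corollary \ref{cor:Rr}, that rescues the estimate and yields a constant free of $r$.
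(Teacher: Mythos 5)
Your proposal is correct and follows essentially the same route as the paper's proof: the same completion-of-the-square decomposition discarding $-\tfrac12\int f|v|^2$, the same bound $\widetilde\vr^r|\widetilde u^r|^2\le\int K^r(x-y)|w|^2f(y,w)\,dw\,dy$ (the paper derives it via Cauchy--Schwarz rather than Jensen, which is an equivalent step), and the same Fubini-plus-Lemma~\ref{lem:MT}/Corollary~\ref{cor:Rr} argument to bound $\int\vr|\widetilde u^r|^2\,dx$ by $C\mathcal{E}(t)$ uniformly in $r$. No gaps to report.
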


\begin{proof}
By definition of $L^r$, we have that
\begin{equation}
	\begin{split}
		L^r[f] &= \frac{1}{\widetilde \vr(x)}\int_{\R^{2d}}K^r(x-y)f(y,w)(w-v)~dwdy \\
		&= \frac{1}{\widetilde \vr(x)}\int_{\R^d}K^r(x-y)(j(y) - \vr(y)v)~dy 
		= \frac{\widetilde j}{\widetilde\vr} - v := \widetilde u - v.
	\end{split}
\end{equation}
By adding and subtracting, we obtain
\begin{equation}\label{eq:setback}
	\begin{split} 
		\int_{\R^{2d}}fvL^r[f]~dvdx 
		&=\int_{\R^{2d}}f(\widetilde u - v)v~dvdx   \\
		&= -\frac{1}{2}\int_{\R^{2d}}f(\widetilde  u - v)^2~dvdx + \frac{1}{2}\int_{\R^{2d}}f \widetilde u^2 - fv^2~dvdx \\
		&\leq -\frac{1}{2}\int_{\R^{2d}}f(\widetilde  u - v)^2~dvdx + \frac{1}{2}\int_{\R^{d}}\varrho \widetilde u^2~dx.
	\end{split}
\end{equation}
From the H\"older inequality, we have that
\begin{equation*}
	\begin{split}
		\widetilde \vr \widetilde u := \int_{\R^{2d}}K^r(x-y)f(y,v)v~dvdy 
		&\leq \widetilde \vr^\frac{1}{2}\left(\int_{\R^{2d}}K^r(x-y)f(y,v)v^2~dvdy \right)^\frac{1}{2}.
	\end{split}
\end{equation*}
Hence, the following inequality holds
\begin{equation*}
	\begin{split}
	\widetilde \vr \widetilde u^2
		&\leq \int_{\R^{2d}}K^r(x-y)f(y,v)v^2~dvdy,
	\end{split}
\end{equation*}
from which we  deduce
\begin{equation}\label{eq:product}
	\begin{split}
		\int_{\R^{d}}\varrho \widetilde u^2~dx &\leq \int_{\R^{3d}}K^r(x-y)\frac{\vr(x)}{\widetilde \vr(x)}f(y,v)v^2~dydvdx \\
		&\leq \sup_y\left(\int_{\R}K^r(x-y)\frac{\vr(x)}{\widetilde \vr(x)}~dx\right)\mathcal{E}(t) \leq C\mathcal{E}(t),
	\end{split}
\end{equation}
where the last inequality follows from Lemma \ref{lem:MT}. Inserting \eqref{eq:product} 
in \eqref{eq:setback} concludes the proof.

\end{proof}

We have now gathered all the ingredients we need to prove Proposition \ref{pro:energy}.

\subsection*{Proof of Proposition \ref{pro:energy}}
Only \eqref{eq:fE} remains to be proved.
By direct calculation, 
\begin{align}\label{eq:hvafaen}
		\frac{d}{dt}\int_{\R^{2d}}f\Psi + f\frac{|v|^2}{2}~dvdx 
		&= \int_{\R^{2d}}f_t \Psi + f_t\frac{|v|^2}{2} ~dvdx \\
		&= \int_{\R^{2d}}vf \Grad_x \Psi - vf\Grad_x \Psi + fv F[f] + fvL^r[f] ~dvdx\nonumber.
\end{align}
Using the symmetry of $K$, we write
\begin{equation*}
	\begin{split}
		\int_{\R^{2d}}fv F[f]~dvdx 
		&=\int_{\R^{4d}} \Phi(x-y)f(x,v)f(y,w)(w-v)v~dwdydvdx \\
		&= \int_{\R^{4d}} \Phi(x-y)f(x,v)f(y,w)(v-w)w~dwdydvdx \\
		&= -\frac{1}{2}\int_{\R^{4d}}\Phi(x-y)f(x,v)f(y,w)|w-v|^2~dwdydvdx.
	\end{split}
\end{equation*}
Then, we conclude the proof by applying this identity and Lemma \ref{lem:Lbound} to \eqref{eq:hvafaen}.

\qed

\section{Convergence and proof of Theorem \ref{thm:main}}
Equipped with the bounds of the previous section, 
we are ready to send $r \rightarrow 0$ in 
\eqref{eq:1} and thereby proving Theorem \ref{thm:main}.
For this purpose, we let $\{r^n\}_n$ be a sequence 
of positive numbers such that $r^n\rightarrow 0$ 
as $n \rightarrow \infty$ and consider 
the corresponding solutions $f^n$ of
\begin{equation}\label{eq:approx}
	f^n_t + \Div_x(vf^n) - \Div_v(f^n\Grad_x \Psi) + \Div_v(f^nF[f^n]) + \Div_v(f^n(\widetilde u^n - v)) = 0,
\end{equation}
where we recall the notation
\begin{equation*}
	\widetilde u^n = \frac{\widetilde j^n}{\widetilde \vr^n} := \frac{\int_{\R^{2d}}K^{r^n}(x-y)f^n(y,w)w~dwdy }{\int_{\R^{2d}}K^{r^n}(x-y)f^n(y,w)~dydw}.
\end{equation*}

Our starting point is that Lemma \ref{lem:Lp}, Proposition \ref{pro:energy}, together 
with Lemma \ref{lem:jn}, asserts the existence of a function
$0 \leq f \in C(0,T;L^1(\R^{2d}))\cap L^\infty(0,T;L^\infty(\R^{2d}))$, 
such that, as $n \rightarrow \infty$,
\begin{equation}\label{eq:weak}
	\begin{split}
	f^n &\weakstar f \quad \text{in $L^\infty(0,T;L^\infty(\R^{2d})\cap L^1(\R^{2d}))$},\\ 
	\vr^n &\weakstar \vr\quad \text{in $L^\infty((0,T);L^p(\R^d))$, $\quad$ for every $p \in \left[1,\frac{d+2}{d}\right)$}, \\
	j^n &\weakstar j \quad \text{in $L^\infty((0,T);L^p(\R^d))$, $\quad$ for every $p \in \left[1,\frac{d+2}{d+1}\right)$}.
	\end{split}
\end{equation}
Moreover, the velocity averaging Lemma \ref{cor:velocity} is applicable. 
By setting $\vphi(v) = 1$ and $\vphi(v)= v$ in Lemma \ref{cor:velocity} we obtain respectively
\begin{equation}\label{eq:strong}
	\begin{split}
			\vr^n &\rightarrow  \vr\quad \text{in $L^q((0,T)\times \R^d)$, $\quad$ for every $q < \frac{d+2}{d+1}$}, \\
			j^n &\rightarrow  j\quad \text{in $L^q((0,T)\times \R^d)$, $\quad$ for every $q < \frac{d+2}{d+1}$},
	\end{split}
\end{equation}
along some subsequence as $n \rightarrow \infty$.
Furthermore, we can prove:
\begin{lemma}\label{lem:tilde}
Given the convergences \eqref{eq:weak} - \eqref{eq:strong}, we have
\begin{equation}
	\widetilde j^n \rightarrow j, \quad \widetilde \vr^n \rightarrow \vr, \quad \text{in $~L^q((0,T)\times \R^d)$},
\end{equation}
where the convergence takes place along the same subsequence as in \eqref{eq:strong}. 
\end{lemma}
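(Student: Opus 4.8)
The plan is to recognize that the tilded quantities are nothing but spatial convolutions of the untilded moments against the approximate identity $K^{r^n}$. Integrating out $w$ and using Fubini in the definitions,
$$\widetilde \vr^n(t,x) = \int_{\R^d}K^{r^n}(x-y)\vr^n(t,y)\,dy = (K^{r^n}*_x\vr^n)(t,x),$$
and likewise $\widetilde j^n = K^{r^n}*_x j^n$, where $*_x$ denotes convolution in the space variable only. Since $\int_{\R^d}K^{r^n}=1$ by \eqref{eq:R} and $r^n\to0$, the family $\{K^{r^n}\}_n$ is a standard approximate identity, so the statement is a mollification-convergence result in which both the mollifier \emph{and} the mollified function vary with $n$.

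To handle this moving target I would split the error as
$$\widetilde \vr^n-\vr = K^{r^n}*_x(\vr^n-\vr) + \left(K^{r^n}*_x\vr - \vr\right),$$
and estimate the two pieces separately. For the first piece, Young's inequality applied slicewise in $t$ gives
$$\left\|K^{r^n}*_x(\vr^n-\vr)\right\|_{L^q((0,T)\times\R^d)} \leq \|K^{r^n}\|_{L^1(\R^d)}\,\|\vr^n-\vr\|_{L^q((0,T)\times\R^d)},$$
which tends to $0$ by the strong convergence \eqref{eq:strong}. The crucial point is that $\|K^{r^n}\|_{L^1(\R^d)}=\|K\|_{L^1(\R^d)}$ is a constant independent of $n$ by scale-invariance of the $L^1$ norm (in fact equal to $1$ for the nonnegative kernels under consideration), so the mollification operators are uniformly bounded on $L^q$.

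For the second piece, $K^{r^n}*_x\vr-\vr\to0$ in $L^q((0,T)\times\R^d)$ is the classical convergence of an approximate identity applied to the \emph{fixed} limit $\vr\in L^q$. Writing the norm as $\int_0^T\|K^{r^n}*_x\vr(t,\cdot)-\vr(t,\cdot)\|_{L^q_x}^q\,dt$, for a.e. $t$ one has $\vr(t,\cdot)\in L^q(\R^d)$ by Fubini, so the integrand converges to $0$ pointwise in $t$ by the standard mollifier result (translation continuity in $L^q$, using $1\le q<\infty$); the domination $\|K^{r^n}*_x\vr(t,\cdot)-\vr(t,\cdot)\|_{L^q_x}^q\le 2^q\|\vr(t,\cdot)\|_{L^q_x}^q\in L^1(0,T)$ then closes the argument via dominated convergence. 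The identical reasoning with $\vr$ replaced by $j$ yields $\widetilde j^n\to j$. The only delicate point — the main ``obstacle,'' such as it is — lies precisely in the interplay within the splitting: one must exploit the uniform $L^1$-normalization of the mollifiers to absorb the first term together with the strong $L^q$-convergence of $\vr^n,j^n$ from \eqref{eq:strong}, since neither approximate-identity convergence nor Young's inequality alone suffices when the mollifier and the function move simultaneously.
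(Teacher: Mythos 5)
Your proposal is correct and follows essentially the same route as the paper: the same decomposition $\widetilde \vr^n - \vr = K^{r^n}\star(\vr^n-\vr) + (K^{r^n}\star\vr - \vr)$, with Young's inequality (uniform $L^1$ bound on the kernels) killing the first term via the strong convergence \eqref{eq:strong}, and classical approximate-identity convergence for the fixed limit handling the second. The only difference is that you spell out the slicewise-in-$t$ dominated convergence argument that the paper subsumes under ``classical results concerning mollifiers,'' which is a matter of detail rather than of approach.
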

\begin{proof}
We commence by recalling the following classical results 
concerning mollifiers like $K^n=K^{r^n}$:
For any $\epsilon > 0$, there is a $m$ such that
\begin{equation*}
	\|K^n\star \vr - \vr\|_{L^q(\R^d)} < \epsilon, \quad \forall~ n \geq m.
\end{equation*}
Now, consider a subsequence $n^k$, where $n^k \geq m$, 
along which $\vr^n \rightarrow \vr$. By adding and subtracting,
we obtain
\begin{equation*}
	\begin{split}
		\|\widetilde \vr^n - \vr\|_{L^q(\R^d)} 
		&\leq \left\|\widetilde \vr^n - K^n\star \vr\right\|_{L^q(\R^d)} + \|K^n \star \vr - \vr\|_{L^q(\R^d)} \\
		&= \left\|K^n\star (\vr^n - \vr)\right\|_{L^q(\R^d)} + \|K^n \star \vr - \vr\|_{L^q(\R^d)} \\
		&\leq \|\vr^n - \vr\|_{L^q(\R^d)} + \epsilon = 2\epsilon,
	\end{split}
\end{equation*}
for any  $q < \frac{d+2}{d+1}$. The same argument can be applied to prove compactness of $\widetilde j^n$, which concludes the proof.

\end{proof}

\begin{lemma}\label{lem:product}
From the convergences \eqref{eq:weak} - \eqref{eq:strong}, it follows that
	\begin{equation*}
		f^n \widetilde u^n \weakstar fu
		 ~ \text{ in $L^\infty((0,T);L^p(\R^d))$$~$ for every $~p \in \left[1,\frac{d+2}{d+1}\right)$}.
	\end{equation*}
\end{lemma}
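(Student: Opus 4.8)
The plan is to identify the weak-$*$ limit by testing against a fixed $\psi\in C_c^\infty((0,T)\times\R^{2d})$ and proving $\int f^n\widetilde u^n\psi\,dvdxdt\to\int fu\,\psi\,dvdxdt$; together with a uniform bound on $f^n\widetilde u^n$ in $L^\infty(0,T;L^p(\R^{2d}))$ this suffices, since $C_c^\infty$ is dense in the predual $L^1(0,T;L^{p'})$ of a bounded sequence. The uniform bound itself comes from the energy estimate: writing $\int f^n|\widetilde u^n|^2\,dvdx=\int\vr^n|\widetilde u^n|^2\,dx$ (as $\widetilde u^n$ is independent of $v$) and using $|\widetilde u^n|^2\le 2|\widetilde u^n-v|^2+2|v|^2$, estimate \eqref{eq:fE} gives $\int\vr^n|\widetilde u^n|^2\,dx\le C$ uniformly in $n$ and $t$; a Hölder interpolation between this and the mass bound controls $\int\vr^n|\widetilde u^n|^p\,dx$ for $p\le 2$, and hence (since $f^n$ is bounded in $L^\infty$) bounds $f^n\widetilde u^n$ in $L^\infty(0,T;L^p(\R^{2d}))$ for $p\le 2$, covering $p<\tfrac{d+2}{d+1}$.

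The central difficulty is that $\widetilde u^n=\widetilde j^n/\widetilde\vr^n$ degenerates on the vacuum set $\{\vr=0\}$, where it need not converge in any sense. I would first extract from Lemma \ref{lem:tilde} a subsequence along which $\widetilde\vr^n\to\vr$ and $\widetilde j^n\to j$ a.e.; at a.e. point of $\{\vr>0\}$ one then has $\widetilde\vr^n\to\vr>0$, so $\widetilde u^n\to j/\vr=u$ a.e. on $\{\vr>0\}$. To tame the vacuum, introduce the truncation $T_M(\xi)=\xi\min\{1,M/|\xi|\}$, so $|T_M(\xi)|\le\min(|\xi|,M)$, and split
\[
\int f^n\widetilde u^n\psi=\int f^nT_M(\widetilde u^n)\psi+\int f^n\bigl(\widetilde u^n-T_M(\widetilde u^n)\bigr)\psi .
\]
Since $|\widetilde u^n-T_M(\widetilde u^n)|\le|\widetilde u^n|\mathbf{1}_{\{|\widetilde u^n|>M\}}$, the remainder is controlled uniformly in $n$ by the $L^2$ bound:
\[
\Bigl|\int f^n(\widetilde u^n-T_M(\widetilde u^n))\psi\Bigr|\le\|\psi\|_\infty\int\vr^n|\widetilde u^n|\,\mathbf{1}_{\{|\widetilde u^n|>M\}}\le\frac{\|\psi\|_\infty}{M}\int\vr^n|\widetilde u^n|^2\le\frac{C}{M}.
\]

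For the truncated term, with $M$ fixed I would pass to the limit in $n$ by splitting the $(t,x)$-integration into $\{\vr=0\}$ and $\{\vr>0\}$. On $\{\vr=0\}$ one bounds $\bigl|\int f^nT_M(\widetilde u^n)\psi\bigr|\le M\|\psi\|_\infty\int_{\{\vr=0\}\cap\,\mathrm{supp}\,\psi}\vr^n\,dx\,dt$, which tends to $0$ because $\vr^n\to\vr=0$ strongly in $L^1$ on the compact support of $\psi$ (from \eqref{eq:strong}). On $\{\vr>0\}$ I would write $f^nT_M(\widetilde u^n)\psi=f^nT_M(u)\psi+f^n\bigl(T_M(\widetilde u^n)-T_M(u)\bigr)\psi$: the first term converges to $\int_{\{\vr>0\}}fT_M(u)\psi$ since $f^n\weakstar f$ tested against the fixed bounded compactly supported weight $T_M(u)\psi\,\mathbf{1}_{\{\vr>0\}}$, while the second is dominated by $\|\psi\|_\infty\int\vr^n|T_M(\widetilde u^n)-T_M(u)|\,\mathbf{1}_{\{\vr>0\}}$ and vanishes by combining the a.e. convergence $T_M(\widetilde u^n)\to T_M(u)$ (bounded by $2M$) with the strong $L^1_{\mathrm{loc}}$ convergence $\vr^n\to\vr$. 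This gives $\int f^nT_M(\widetilde u^n)\psi\to\int fT_M(u)\psi$ for each fixed $M$.

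Finally I would send $M\to\infty$, using $\bigl|\int f(T_M(u)-u)\psi\bigr|\le\frac{\|\psi\|_\infty}{M}\int\vr|u|^2\le\frac{C}{M}$, where $\int\vr|u|^2\le C$ for the limit follows by weak lower semicontinuity of the energy. Combining the three pieces in a standard $\varepsilon$-argument — first choose $M$ so both $C/M$-terms are below $\varepsilon$ uniformly in $n$, then let $n\to\infty$ for that fixed $M$ — yields $\int f^n\widetilde u^n\psi\to\int fu\psi$, and hence the stated weak-$*$ convergence. The main obstacle throughout is precisely the vacuum degeneracy of $\widetilde u^n$; the scheme hinges on the uniform bound $\int\vr^n|\widetilde u^n|^2\le C$ of Proposition \ref{pro:energy} (to truncate large velocities) and on the \emph{strong} convergence $\vr^n\to\vr$ of \eqref{eq:strong} (to annihilate the vacuum contribution).
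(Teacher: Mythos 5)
Your proposal is correct, but it identifies the limit by a genuinely different mechanism than the paper. The paper tensorizes the test function as $\phi(x)\vphi(v)$ and works with the moments $\widetilde m^n_\vphi=\widetilde u^n\vr^n_\vphi$: these are bounded in $L^q$, $q<\tfrac{d+2}{d+1}$, by H\"older together with Lemma \ref{lem:jn}, a weak-$*$ limit $m$ is extracted, the vacuum set $A_R=\{\vr=0\}$ is handled by noting $\|\widetilde m^n_\vphi\|_{L^p(A_R)}\leq C\|\vr^n\|^{1/2}_{L^p(A_R)}\to 0$, and on the superlevel sets $\{\vr>\eps\}$ the identification $m=u\vr_\vphi$ is obtained via Egorov's theorem: Lemma \ref{lem:tilde} gives uniform convergence of $\vr^n,\widetilde\vr^n$ on sets of nearly full measure, whence $\widetilde\vr^n\geq\eps/2$ there and the quotient $\widetilde j^n\vr^n_\vphi/\widetilde\vr^n$ passes to the limit. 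You replace the Egorov step by the truncation $T_M$: a.e.\ convergence $\widetilde u^n\to u$ on $\{\vr>0\}$ (along a further subsequence, again from Lemma \ref{lem:tilde}), dominated convergence played against the strong $L^1_{\mathrm{loc}}$ convergence of $\vr^n$, Chebyshev plus the weighted energy bound $\int\vr^n|\widetilde u^n|^2\,dx\leq C$ for the truncation error uniformly in $n$, and the vacuum contribution annihilated directly by $\vr^n\to\vr=0$ in $L^1$ on $\{\vr=0\}\cap\operatorname{supp}\psi$. Both proofs rest on the same two pillars --- the strong compactness of \eqref{eq:strong} and Lemma \ref{lem:tilde}, and the bound $\int\vr^n|\widetilde u^n|^2\,dx\leq C\E(t)$ which ultimately comes from Lemma \ref{lem:MT} --- but your route is more elementary (no Egorov, no appeal to Lemma \ref{lem:jn} for the bound on the product), and your interpolation $\int\vr^n|\widetilde u^n|^p\,dx\leq(\int\vr^n\,dx)^{1-p/2}(\int\vr^n|\widetilde u^n|^2\,dx)^{p/2}$ even yields the uniform $L^\infty_t L^p$ bound for all $p\in[1,2]$, a larger range than the lemma states. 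Two routine points to tighten: (i) for the bound $\sup_t\int\vr^n|\widetilde u^n|^2\,dx\leq C$ you should cite \eqref{eq:product} (which is genuinely pointwise in time) rather than deduce it from the dissipation term in \eqref{eq:fE}, since that term arises time-integrated from the Gronwall argument; a time-integrated bound suffices for your truncation error but not for the $L^\infty(0,T;L^p)$ statement; (ii) since you extract further subsequences to get a.e.\ convergence, you should close with the standard argument that every subsequence admits a sub-subsequence converging weak-$*$ to the same limit $fu$, so the whole sequence converges.
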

\begin{proof}
For the sake of clarity, let us introduce the notation
\begin{equation*}
	\vr^n_\vphi = \int_{\R^d}f^n\vphi(v)~dv, \qquad \widetilde m^n_\vphi = \widetilde u^n \vr^n_\vphi.
\end{equation*}
For any smooth function $\psi(x,v):=\phi(x)\vphi(v)$, we write
\begin{equation}\label{eq:topass}
	\begin{split}
		\int_{\R^{2d}}f^n \widetilde u^n \psi~dvdx&=
		\int_{\R^d} \widetilde u^n \phi(x)\left(\int_{\R^d}f^n \vphi(v)~dv\right)~dx \\
		&= \int_{\R^d}\widetilde u^n \vr^n_\vphi \phi~dx
		=\int_{\R^d}\widetilde m^n_\vphi \phi~dx.
	\end{split}
\end{equation}

Now, using the H\"older inequality, we find that
\begin{equation}\label{eq:mphi}
	\begin{split}
		\|\widetilde m^n_\vphi\|_{L^q(\R^d)} &\leq \|\vphi\|_{L^\infty(\R^d)}\|\vr^n\|^\frac{1}{2}_{L^{\frac{q}{2-q}}(\R^d)}\|(\vr^n)^\frac{1}{2}\widetilde u^n\|_{L^2(\R^d)} \\
		&\leq C\|\vr^n\|_{L^p(\R^d)}^\frac{1}{2}\|(\vr^n)^\frac{1}{2}\widetilde u^n\|_{L^2(\R^d)},
	\end{split}
\end{equation}
which is bounded by \eqref{eq:product} and Lemma \ref{lem:jn} provided 
\begin{equation*}
	p < \frac{d+2}{d} \quad \Rightarrow\quad q < \frac{d+2}{d+1}.
\end{equation*}
Hence, there exists a function $m \in L^\infty((0,T);L^q(\R^d))$ 
and a subsequence such that
\begin{equation*}
	\widetilde m^n_\vphi \weakstar m ~ \text{ in $L^\infty((0,T);L^p(\R^d))$, $\quad$ for every $p \in \left[1,\frac{d+2}{d+1}\right)$},
\end{equation*}
and it only remains to prove that
\begin{equation*}
	m = u\vr_\vphi, \quad \text{where $u$ is such that}\quad j = \vr u.
\end{equation*}

Let us first verify the existence of such a function $u$. 
Consider the set 
$$
A_R = \Set{(t,x) \in B_R(0)\times (0,T); \vr(t,x) = 0},
$$
where $B_R(0)$ is the ball of radius $R$ centered at $0$. By direct calculation,
\begin{equation*}
	\begin{split}
		\int_{A_R}|j_n|~dxdt 
		&\leq \left(\int_{A_R}\vr^n|u^n|^2~dxdt\right)^\frac{1}{2}\left(\int_{A^R}\vr^n~dxdt\right) \\
		&\leq CT\left(\int_{A_R}\vr^n~dxdt\right) \overset{n \rightarrow \infty}{\longrightarrow} 0,
	\end{split}
\end{equation*}
and hence we have that $j = 0$ a.e in $A^R$. If we define the function $u$ as
\begin{equation}
	u(t,x) = 
	\begin{cases}
		\frac{j(t,x)}{\vr(t,x)}, & \text{if }\vr(t,x) \neq 0, \\
		0, & \text{if }\vr(t,x) = 0,
	\end{cases}
\end{equation}
we have that $j = \vr u$ and it remains to prove that $m = \vr_\psi u$.
To this aim, we first observe that we can deduce as in \eqref{eq:mphi}
that
\begin{equation*}
	\|m_\vphi^n\|_{L^p(A_R)} \leq C\|\vr^n\|_{L^p(A_R)}^\frac{1}{2}\overset{n\rightarrow \infty}{\longrightarrow }0,
\end{equation*}
and hence it suffices to check that
\begin{equation*}
	m(t,x)= u(t,x)\vr_\vphi(t,x), \quad \text{whenever }\vr(t,x) \neq 0.
\end{equation*}
For this purpose, we consider the set
\begin{equation*}
	B^\epsilon_R = \Set{(t,x) \in B_R(0)\times (0,T); \vr(t,x) > \epsilon}.
\end{equation*}
From Egorov's theorem and the compactness of $\vr^n$ and $\widetilde \vr^n$ (Lemma \ref{lem:tilde}), we have the existence 
of a set $C_\eta \subset B^\epsilon_R$ with measure $|B_R^\epsilon \setminus C_\eta| < \eta$
on which $\widetilde \vr^n$ and $\vr^n$ converge uniformly to $\vr$. 
Then, for $n$ sufficiently large, 
\begin{equation*}
	\widetilde \vr^n \geq \epsilon/2 \quad \text{in } C_\eta,
\end{equation*}
and since
\begin{equation*}
	m_\vphi^n = \widetilde u^n \vr_\vphi^n = \frac{\widetilde j^n}{\widetilde \vr^n}\vr_\vphi^n,
\end{equation*}
we can pass to the limit on $C_\eta$ to deduce
\begin{equation*}
	m = \frac{j}{\vr}\vr_\vphi = u \vr_\vphi\quad \text{in } C_\eta.
\end{equation*}
Since this holds for all $\eta > 0$, we can conclude
\begin{equation*}
	m = u \vr_\vphi \quad \text{in }B_R^\epsilon,
\end{equation*}
for every $R$ and $\epsilon$. We conclude that, 
$$
m = u \vr_\vphi \quad \text{on }\Set{\vr > 0}.
$$

\end{proof}

\subsection*{Proof of Theorem \ref{thm:main}:}
The weak formulation of \eqref{eq:approx} reads
\begin{equation}\label{eq:apweak}
	\begin{split}
		&\int_0^T\int_{\R^{2d}}f^n(\psi_t + v\cdot \Grad_x \psi - \Grad_x \Psi\Grad_v \psi)~dvdxdt \\
		&\qquad := I^n_1 + I^n_2 - \int_{\R^2d}f_0^n\psi(0,\cdot)~dvdx, \quad \forall \psi \in C_c^\infty((0,T)\times \R^{2d}),
	\end{split}
\end{equation}
where we have introduced the quantities 
\begin{align*}
	I^n_1 &= -\int_0^T\int_{\R^{4d}}\Phi(x-y)f^n(x,v)f^n(y,w)(w-v)\Grad_v \psi(x,v)~dwdydvdxdt, \\
	I^n_2 &= - \int_0^T\int_{\R^{2d}}f^n(\widetilde u^n - v)\Grad_v \psi~dvdxdt.
\end{align*}
By virtue of \eqref{eq:weak}, we can pass to the limit in \eqref{eq:apweak} 
to conclude
\begin{equation}\label{eq:adone1}
	\begin{split}
		&\int_0^T\int_{\R^{2d}}f(\psi_t + v\cdot \Grad_x \psi - \Grad_x \Psi\Grad_v \psi)~dvdxdt \\
		&\qquad := I_1 + \lim_{n\rightarrow \infty}I_2^n-\int_{\R^2d}f_0\psi(0,\cdot)~dvdx,
	\end{split}
\end{equation}
where $I_1 = -\int_0^T\int_{\R^{4d}}\Phi(x-y)f(x,v)f(y,w)(w-v)\Grad_v \psi(x,v)~dwdydvdxdt$.

From Lemma \ref{lem:product}, we have that $f^n \widetilde u^n \weakstar fu$ 
in $L^\infty((0,T);L^q(\R^{2d}))$, for any $q < \frac{d+2}{d+1}$, 
and hence there is no problems with passing to the limit in $I^n_2$ 
to discover
\begin{equation*}
	\begin{split}
			\lim_{n\rightarrow \infty} I_2^n 
			&= - \lim_{n \rightarrow \infty}\int_{\R^{2d}}f^n(\widetilde u^n - v)\Grad_v \psi~dvdx 
			= - \int_{\R^{2d}}f(u - v)\Grad_v \psi~dvdx.
	\end{split}
\end{equation*}
By setting this in \eqref{eq:adone1} and recalling that $\Psi$ is arbitrary, we conclude that 
the limit $f$ is a weak solution to 
\begin{equation*}
	f_t + \Div_x(vf) - \Div_v (f \Grad_x \Psi) + \Div_v ( fF[f]) + \Div_v(f(u-v)) = 0.
\end{equation*} 
This concludes the proof of Theorem \ref{thm:main}.

\qed

\end{document}